\theoremstyle{plain}
\newtheorem{theorem}{Theorem}[section]
\newtheorem{lemma}[theorem]{Lemma}
\newtheorem{proposition}[theorem]{Proposition}
\theoremstyle{definition}
\newtheorem{remark}[theorem]{Remark}
\theoremstyle{remark}
\begin{document}

\title{Extreme quantile regression in a proportional tail framework.}
\author{Benjamin Bobbia$^*$, Clément Dombry$^*$, Davit Varron\footnote{Universit\'{e} de Bourgogne-Franche-Comt\'{e}, laboratoire de Math\'{e}matiques de Besaçon, UMR 6623}}
\maketitle
\date{}

\begin{abstract}
We revisit the model of heteroscedastic extremes initially introduced by Einmahl et al. (JRSSB, 2016) to describe the evolution of a non stationary sequence whose extremes evolve over time and adapt it into a general extreme quantile regression framework. We provide estimates for the extreme value index and the integrated skedasis function and prove their asymptotic normality. Our results are quite similar to those developed for heteroscedastic extremes but with a different proof approach  emphasizing coupling arguments. We also propose a pointwise estimator of the skedasis function and a Weissman estimator of the conditional extreme quantile  and prove the asymptotic normality of both estimators.
\end{abstract}
\section{Introduction and main results}
\subsection{Framework}
One of the main goals of extreme value theory is to propose estimators of extreme quantiles.  Given an i.i.d. sample $Y_1,\ldots,Y_n$ with distribution $F$, one wants to estimate the quantile of order $1-\alpha_n$ defined as
 \[
 q(\alpha_n)=F^\leftarrow(1-\alpha_n)
 \] 
 with $\alpha_n\to 0$ as $n\to\infty$ and $F^\leftarrow(u)=\sup\{x\in\mathbb{R}: F(x)\geq u\}$, $u\in(0,1)$, the quantile function.  The extreme regime corresponds to the case when $\alpha_n< 1/n$  in which case extrapolation beyond the sample maximum is needed. Considering an application in hydrology, these mathematical problems corresponds to the following situations: given a  record over $n=50$ years of the level of a river, can we estimate the $100$-year return level ? The answer to this question is provided by the univariate extreme value theory and we refer to the monographs by Coles \cite{C01}, Beirlant et al. \cite{BGTS04} or de Haan and Ferreira \cite{dehaanferreira} for a general background.
 
 In many situations, auxiliary information is available and represented by a covariate $X$ taking value in $\mathbb{R}^d$ and one want to estimate $q(\alpha_n| x)$, the conditional $(1-\alpha_n)$-quantile of $Y$  with respect to some given values of the covariate $X=x$. This yields the extreme quantile regression problem. Recent advances in extreme quantile regression include the works by Chernozhukov \cite{C05}, El Methni et al. \cite{EMGGG12} or Daouia et al. \cite{DGG13}. 
 
In this paper we develop the proportional tail framework for extreme quantile regression that is an adaptation of the  heteroscedastic extreme framework developed by Einmahl et al.  \cite{EdHZ16}, where the authors propose a model for the extremes of independent but non stationary observations whose distribution evolves over time. The model can be viewed as a regression framework with  time as covariate and  deterministic design with uniformly distributed observation times $1/n,2/n,\ldots, 1$. In our setting, the covariate $X$ takes value in $\mathbb{R}^d$ and is random with arbitrary distribution. The main assumption, directly adapted from Einmmahl et al. \cite{EdHZ16}, is the so called proportional tail assumption formulated in Equation~\eqref{skedasis} and stating that the conditional tail function of $Y$ given $X=x$ is asymptotically proportional to the unconditional tail. The proportionality factor is given by the so called skedasis function $\sigma(x)$ that accounts for the dependency of the extremes of $Y$ with respect to the covariate $X$. Furthermore, as it is standard in extreme value theory, the unconditional distribution of $Y$ is assumed to be regularly varying. Together with the proportional tail assumption, this implies that all the conditional distributions are regularly varying with the same extreme value index. Hence the proportional tail framework  appears suitable for modeling covariate dependent extremes where the  extreme value index is constant but  the scale parameter depends on the covariate $X$ in a non parametric way related to the skedasis function $\sigma(x)$. Note that this framework is also considered by Gardes \cite{G15} for the purpose of estimation of the extreme value index.

Our main results are presented in the following subsections. Section~\ref{model presentation} considers the estimation of the extreme value index and  integrated skedasis function in the proportional tail model and our results of asymptotic normality are  similar to those in Einmahl et al. \cite{DEdH89} but with a different proof  emphasizing coupling arguments. Section~\ref{sec:extreme-quantile} considers pointwise estimation of the skedasis function and  conditional extreme quantile estimation with Weissman estimators and state their asymptotic normality. Section~\ref{sec:coupling} develops some coupling arguments used in the proofs of the main theorems, proofs  gathered in Section~\ref{sec:proofs}. Finally, an appendix states a technical lemma and its proof. 

\subsection{The proportional tail model}\label{model presentation}

Let $(X,Y)$ be a generic random couple taking values in $\mathbb{R}^d\times \mathbb{R}$. Define the conditional cumulative distribution function of $Y$ given $X=x$ by
$$
F_x(y)=\mathbb{P}(Y \leq y |X=x), \ y \in \mathbb{R}, \ x \in \mathbb{R}^d.
$$
The main assumption of \textit{the proportional tail model} is
\begin{equation}\label{skedasis}
\underset{y \rightarrow +\infty}{\lim}\frac{1-F_x(y)}{1-F^0(y)}=\sigma(x) \quad \text{uniformly in } x \in \mathbb{R}^d,
\end{equation}
where $F^0$ is some baseline distribution function with upper endpoint $y^*$ and $\sigma$ is the so-called \underline{skedasis function} following the terminology introduced in \cite{EdHZ16}.
By integration, the unconditional distribution $F$ of $Y$ satisfies 
$$
\underset{y \rightarrow +\infty}{\lim}\frac{1-F(y)}{1-F^0(y)}=\int_{\mathbb{R}^d}\sigma(x) \mathbf{P}_X(dx).
$$
We can hence suppose without loss of generality that $F=F^0$ and $\sigma$ is a $ \mathbf{P}_X$-density function.

\medskip
For the purpose of estimation of the tail distribution in the regime of extreme, we assume furthermore that $F$ belongs to the domain of attraction of some extreme-value distribution $G_\gamma$ with $\gamma >0$,  denoted by $F \in D(G_\gamma)$. Equivalently,  $F$ is of Pareto-type with $\alpha=1/\gamma$. Together with the proportional tail condition  $\eqref{skedasis}$ with $F=F^0$, this implies that $F_x \in D(G_\gamma)$ for all $x\in\mathbb{R}^d$ with constant extreme value index. This is a strong consequence of the model assumptions.

In this model, the extreme are driven  by two parameters: the extreme value index $\gamma>0$ and the skedasis function $\sigma$. Following \cite{EdHZ16}, we consider the Hill estimator for $\gamma$ and a non-parametric estimator of the integrated  skedasis function
$$
C(x)=\int_{\{ u \leq x \}} \sigma(u) \mathbf{P}_X(du),\quad x\in\mathbb{R}^d,
$$
where  $u \leq x$ stands for the componentwise comparison of vectors. Note that $C$ is easier to estimate than $\sigma$, in the same way that a cumulative distribution function is easier to estimate than a density function. Estimation of $C$ is useful to derive tests while estimation of $\sigma$ will be considered later on for the purpose of extreme quantile estimation.

Let $(X_i,Y_i)_{1\leq i \leq n}$ be i.i.d copies of $(X,Y)$. The estimators are built with observations $(X_i,Y_i)$ for which $Y_i$ exceeds a high threshold $\textbf{y}_n$. The real valued threshold $(\textbf{y}_n)_{n \in \mathbb{N}}$ can be deterministic or data driven. For the purpose of asymptotics, $\textbf{y}_n$ depends on the sample size $n\geq 1$ in a way such 
\[
\textbf{y}_n\to\infty\quad \mbox{and}\quad N_n\to\infty
\quad \mbox{in probability}
\] 
with $N_n:=\sum_{i=1}^n\mathds{1}_{\{Y_i > \textbf{y}_n\}}$ the (possibly random) number of exceedances.
The extreme value index $\gamma>0$ is estimated by the Hill-type estimator 
$$
\hat{\gamma}_n=\frac{1}{N_n}\underset{i=1}{\overset{n}{\sum}}\log\left(\frac{Y_{i}}{\mathbf{y}_{n}}\right) \mathds{1}_{\left\{Y_i>\mathbf{y}_{n}\right\}}.
$$
The integrated skedasis function $C$ can be estimated by the pseudo empirical distribution function
$$
\widehat{C}_n(x):=\frac{1}{N_n}
\sum_{i=1}^{n}  \mathds{1}_{\left\{Y_i>\mathbf{y}_{n} ,\ X_i\leq x\right\}}, \ x \in \mathbb{R}^d.
$$
When $Y$ is continuous and $\textbf{y}_n:=Y_{n-k_n;n}$ is the $(k_n+1)$-th top order statistic, then $N_n=k$ and $\hat{\gamma}_n$ is the usual Hill estimator.

Our first result addresses the joint asymptotic normality of $\hat\gamma_n$ and $\widehat{C}_n$, namely
\begin{equation}\label{EdHZ conv}
v_n\begin{pmatrix}
\widehat{C}_n(\cdot)-C(\cdot) \\
\hat{\gamma}_n-\gamma
\end{pmatrix}\underset{n \rightarrow +\infty}{\overset{\mathscr{L}}{\longrightarrow}}\mathbb{W},
\end{equation}
where $\mathbb{W}$ is a Gaussian Borel probability measure on $L^\infty(\mathbb{R}^d)\times \mathbb{R}$, and $v_n \rightarrow +\infty$ is a deterministic rate. To prove the asymptotic normality, the threshold $\mathbf{y}_n$ must scale suitably with respect to the rates of convergence in the proportional tail and domain of attraction conditions. More precisely, we assume the existence of a positive function $A$ converging to zero and such that, as $y\to\infty$,
\begin{equation}\label{second order 1}
\underset{x \in \mathbb{R}^d}{\sup}\left|\frac{\bar{F}_x(y)}{\sigma(x)\bar{F}(y)}-1\right|=\mathrm{O}\left(A\left(\frac{1}{\bar{F}(y)}\right)\right)
\end{equation}
and
\begin{equation}\label{second order 2}
 \underset{z >\frac{1}{2}}{\sup}\left|\frac{\bar{F}(zy)}{z^{-\alpha}\bar{F}(y)}-1\right|=\mathrm{O}\left(A\left(\frac{1}{\bar{F}(y)}\right)\right)
\end{equation}
with $\bar{F}(y):=1-F(y)$ and $\bar F_x(y):=1-F_x(y)$. Our main result can then be stated as follows.

\begin{theorem}\label{EdHZ}
Assume that assumptions \eqref{second order 1} and \eqref{second order 2} hold and that
$\mathbf{y}_n/y_n\to 1$ in probability for some deterministic sequence $y_n$ such that $p_n:=\bar{F}(y_n)$ satisfies
$$ p_n\to 0,\quad  np_n \rightarrow +\infty\quad \mbox{and} \quad \sqrt{np_n}^{1+\varepsilon}A\left(\frac{1}{p_n}\right)\rightarrow 0 \ \text{for some } \varepsilon >0.$$
	Then,  the asymptotic normality \eqref{EdHZ conv} holds with 
	$$
	v_n:=\sqrt{np_n} \quad \text{and} \quad
	\mathbb{W}\overset{\mathscr{L}}{=}\begin{pmatrix}
	B \\ N
	\end{pmatrix},
	$$
	with $B$ a $C$-Brownian bridge on $\mathbb{R}^d$ and $N$ a centered Gaussian random variable with variance $\gamma^2$ and independent of $B$.
\end{theorem}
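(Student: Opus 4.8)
The plan is to funnel both estimators through a single object, the \emph{tail empirical process}
\[
Z_n(v,x):=\frac{1}{np_n}\sum_{i=1}^n\mathds{1}_{\{Y_i>v y_n,\ X_i\le x\}},\qquad v\ge 1/2,\ x\in\mathbb{R}^d\cup\{\infty\},
\]
where $x=\infty$ means no constraint on $X_i$, together with its mean $\bar Z_n(v,x):=p_n^{-1}\mathbb{P}(Y>v y_n,\,X\le x)$. Writing $\log(Y_i/\mathbf y_n)\mathds{1}_{\{Y_i>\mathbf y_n\}}=\int_1^\infty\mathds{1}_{\{Y_i>w\mathbf y_n\}}\,dw/w$ and substituting $v=w\,\mathbf y_n/y_n$, one obtains, with $R_n:=\mathbf y_n/y_n$, the exact identities
\[
\widehat C_n(x)=\frac{Z_n(R_n,x)}{Z_n(R_n,\infty)},\qquad
\hat\gamma_n=\frac{1}{Z_n(R_n,\infty)}\int_{R_n}^{\infty}Z_n(v,\infty)\,\frac{dv}{v}.
\]
Thus both estimators are ratios of bounded linear functionals of $Z_n$ evaluated at the random level $R_n\to 1$, and this structural fact is what I would exploit throughout.

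The core step is a functional central limit theorem for $Z_n$: the process $\sqrt{np_n}\,(Z_n-\bar Z_n)$ converges weakly in $\ell^\infty$ over a suitable index set to a centred Gaussian process $\mathbb Z$ with covariance $\mathbb{E}[\mathbb Z(v,x)\mathbb Z(v',x')]=(v\vee v')^{-\alpha}\,C(x\wedge x')$. This is where the coupling of Section~\ref{sec:coupling} enters: conditionally on the exceedances, the pairs $(X_i,Y_i/y_n)$ with $Y_i>y_n/2$ are i.i.d.\ from a law which, by \eqref{second order 1}--\eqref{second order 2}, lies within total variation $\mathrm{O}(A(1/p_n))$ of the product law $C\otimes\mathrm{Pareto}(\alpha)$; one couples them with an i.i.d.\ sample $(X_i^*,V_i)$ from that product law (so $X_i^*$ and $V_i$ are independent), for which classical Donsker theory applies (the quadrants $\{u\le x\}$ form a VC class). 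Transferring back, the number of mismatched pairs is $\mathrm{O}_{\mathbb{P}}(np_nA(1/p_n))$: this is $\mathrm{o}_{\mathbb{P}}(\sqrt{np_n})$ for the $\widehat C_n$-part as soon as $\sqrt{np_n}A(1/p_n)\to 0$, whereas in the $\hat\gamma_n$-part the mismatched pairs carry the heavy-tailed weights $\log(Y_i/y_n)$, which costs an extra $\log(np_n)$ factor and is precisely why the marginally stronger rate $\sqrt{np_n}^{1+\varepsilon}A(1/p_n)\to 0$ is imposed. Finally the uniform estimate $\bar Z_n(v,x)=v^{-\alpha}C(x)+\mathrm{O}(A(1/p_n))$, again from \eqref{second order 1}--\eqref{second order 2}, makes the bias negligible after multiplication by $\sqrt{np_n}$.

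It then remains to remove the random threshold and to pass to the limit. Because $R_n\to 1$ in probability, the asymptotic equicontinuity contained in the functional CLT gives
\[
\sqrt{np_n}\Bigl[\bigl(Z_n(R_n,\cdot)-\bar Z_n(R_n,\cdot)\bigr)-\bigl(Z_n(1,\cdot)-\bar Z_n(1,\cdot)\bigr)\Bigr]\longrightarrow 0\quad\text{in probability, uniformly,}
\]
so the centred part is blind to $R_n$; the sole surviving $R_n$-dependence is the factor $R_n^{-\alpha}$ in $\bar Z_n$, and since $C(\infty)=1$ and $\int_1^\infty v^{-\alpha-1}\,dv=\gamma$ this factor cancels in both ratios. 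Applying the functional delta method to the maps $\zeta\mapsto\zeta(1,\cdot)/\zeta(1,\infty)$ and $\zeta\mapsto\bigl(\int_1^\infty\zeta(v,\infty)\,dv/v\bigr)/\zeta(1,\infty)$ at $\zeta:(v,x)\mapsto v^{-\alpha}C(x)$ then yields, jointly,
\[
\sqrt{np_n}\,(\widehat C_n-C)\ \overset{\mathscr L}{\longrightarrow}\ B:=\mathbb Z(1,\cdot)-C(\cdot)\,\mathbb Z(1,\infty),\qquad
\sqrt{np_n}\,(\hat\gamma_n-\gamma)\ \overset{\mathscr L}{\longrightarrow}\ N:=\int_1^\infty\mathbb Z(v,\infty)\,\frac{dv}{v}-\gamma\,\mathbb Z(1,\infty).
\]

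A direct computation from the covariance of $\mathbb Z$ identifies the limits: $\mathbb{E}[B(x)B(x')]=C(x\wedge x')-C(x)C(x')$, so $B$ is a $C$-Brownian bridge; $\mathrm{Var}(N)=2\gamma^2-2\gamma^2+\gamma^2=\gamma^2$; and $\mathbb{E}[B(x)\,\mathbb Z(v,\infty)]=v^{-\alpha}C(x)-C(x)v^{-\alpha}=0$ for every $v\ge1$, so the jointly Gaussian pair $(B,N)$ is independent, as claimed. I expect the main obstacle to be the functional CLT for $Z_n$ with uniformity up to $v\to\infty$: controlling the contribution of the few largest order statistics to $\int_{R_n}^\infty Z_n(v,\infty)\,dv/v$, establishing tightness of the suitably weighted process as $v\to\infty$ under only the heavy-tail and second-order hypotheses, and keeping the coupling error explicit in that range. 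By contrast the $\mathbb R^d$-indexing in $x$ is a routine VC/Donsker matter, and the equicontinuity substitution, the cancellation of $R_n$ and the delta-method steps are standard once the CLT is in hand.
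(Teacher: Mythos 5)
Your proposal follows essentially the same route as the paper: couple the exceedances with an i.i.d.\ sample from the limit law $\sigma(x)\mathbf{P}_X\otimes\mathrm{Pareto}(\alpha)$, prove a Donsker theorem for the tail empirical process via a VC argument, transfer it back through the coupling with error controlled by $A(1/p_n)$, recover $\widehat C_n$ and $\hat\gamma_n$ by a continuous-mapping/delta-method step, and remove the random threshold by asymptotic equicontinuity (the paper does this via the extended continuous mapping theorem). Your identification of the limit covariances, of $B$ as a $C$-Brownian bridge, of $\mathrm{Var}(N)=\gamma^2$ and of the independence of $B$ and $N$ is correct.

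Two caveats. First, the obstacle you flag at the end --- continuity of $\zeta\mapsto\int_1^\infty\zeta(v,\infty)\,dv/v$ and tightness as $v\to\infty$ --- is the actual technical heart of the argument and cannot be left open: the paper resolves it by working throughout in the weighted space $L^{\infty,\beta}$ with norm $\sup_{x,v}v^\beta|f(x,v)|$ for a fixed $0<\beta<\alpha\varepsilon/2$, and the Donsker property for the weighted class $\big\{(u,v)\mapsto y^{\beta}\mathds{1}_{\{u\le x,\,v\ge y\}}\big\}$ follows because it is VC-subgraph with envelope $v^{\beta}$, square-integrable under $\mathrm{Pareto}(\alpha)$ since $2\beta<\alpha$. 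Second, your accounting of where the strengthened rate $\sqrt{np_n}^{\,1+\varepsilon}A(1/p_n)\to0$ is spent is not right. The $Y$-coupling is multiplicative ($|Y^*_{i,n}/\tilde Y_{i,n}-1|\le MA_n$ on exceedances), so the discrepancy it induces in $\int_1^\infty Z_n(v,\infty)\,dv/v$ is $O_P(A_n)$ with no $\log(np_n)$ factor; the extra power $\varepsilon$ is instead consumed by the $X$-mismatch term measured in the weighted norm, namely
\[
\sup_{v\ge1}\ \frac{v^{\beta}}{N_n}\sum_{i=1}^n\mathds{1}_{\{X_i\ne X^*_{i,n}\}}\mathds{1}_{\{Y^*_{i,n}>vy_n\}}E_{i,n}
\ \le\ \max_{i}\Big(\tfrac{Y^*_{i,n}}{y_n}\Big)^{\beta}E_{i,n}\cdot\frac{1}{N_n}\sum_{i=1}^n\mathds{1}_{\{X_i\ne X^*_{i,n}\}}E_{i,n}
= O_P\big((np_n)^{\beta/\alpha}A_n\big),
\]
which after multiplication by $\sqrt{np_n}$ is $o_P(1)$ precisely because $\beta/\alpha<\varepsilon/2$. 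Tracking the $X$-mismatches only at the level $v=1$, as you do, misses this term; once you install the weighted norm needed for your delta-method step, you will meet it.
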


\begin{remark}
While Theorem \ref{EdHZ} extends Theorem 2.1 in Einmhal \textit{et al}. \cite{EdHZ16}, that extension comes at the price of a slightly more stringent condition upon the bias control. Indeed, their condition $\sqrt{k_n}A(n/k_n)\rightarrow 0$ corresponds to our condition $\sqrt{np_n}^{1+\varepsilon}A(1/p_n)\rightarrow 0$ with $\varepsilon=0$. We believe that this loss is small in regard to the more general choice of $\textbf{y}_n$. For example, the data-driven threshold $\mathbf{y}_n:=Y_{n-k_n;n}$ is equivalent in probability to $y_n:=F^\leftarrow\left(1-\frac{k_n}{n}\right)$, with $F^\leftarrow(p)=\inf\{y | F(y)\geq p\}$, $p\in (0,1)$ the pseudo-inverse of $F$. As a consequence, Theorem \ref{EdHZ} holds for this choice of $\textbf{y}_n$ if 
\[
k_n\rightarrow +\infty, \ \frac{k_n}{n} \rightarrow 0, \ \text{and} \ \sqrt{k_n}^{1+\varepsilon}A\left(\frac{n}{k_n}\right)\rightarrow 0.
\]
\end{remark}

\subsection{Extreme quantile regression}\label{sec:extreme-quantile}
We now adress the estimation of extreme conditional quantiles in the proportional tail model. The conditional quantile of order $1-\alpha$ given $X=x$ is defined by 
\[
q(\alpha|x)=F_x^\leftarrow (1-\alpha), \quad \alpha\in (0,1).
\]
We say that we estimate an extreme quantile when $\alpha$ is small with respect to the inverse sample size $1/n$; more formally, we will here consider the asymptotic regime $\alpha=\alpha_n$ with $\alpha_n=O(1/n)$ as $n\to\infty$.

Let us first remind the reader about some notions of  extreme quantile estimation in a non conditional framework and the heursitic idea behind the Weissman  estimator \cite{W78}. For quantile estimation, it is natural to consider $\mathbb{F}_n^\leftarrow(\alpha)$, the inverse of the empirical distribution function $\mathbb{F}_n$. However, in the extreme regime $\alpha<1/n$, this estimator is nothing but the maximum of the sample and we see that some kind of extrapolation beyond the sample maximum is needed. Such an extrapolation relies on the first order condition $F\in D(G_\gamma)$ which is equivalent to the regular variation of the tail quantile function 
\[
\underset{t \rightarrow \infty}{\lim}\frac{U(tz)}{U(t)}=z^\gamma, \quad z>0,
\] 
with $U(t)=F^\leftarrow (1-1/t)$, $t>1$. Consider a high threshold $\textbf{y}_n$ and write  $p_n=\bar{F}(\textbf{y}_n)$. Using regular variation, the quantile $q(\alpha_n):=F^\leftarrow(1-\alpha_n)$ is approximated by 
\[
q(\alpha_n)=U(1/p_n)\frac{U(1/\alpha_n)}{U(1/p_n)}\approx\textbf{y}_n\left(\frac{p_n}{\alpha_n}\right)^\gamma,
\]
leading to the Weissman-type quantile estimator
\[
\hat{q}(\alpha_n):=\textbf{y}_{n}\left(\frac{\hat{p}_n}{\alpha_n}\right)^{\hat{\gamma}_n}.
\]
Where $\hat{p}_n:=\frac{1}{n}\sum_{i=1}^n\mathds{1}_{\{Y_i> \mathbf{y}_n\}}$ is an estimator of $p_n=\mathbb{P}(Y>\textbf{y}_n)$ depending on the unconditional distribution of $Y$.   

Now going back to quantile regression in the proportional tail model, it is readily verified that assumption \eqref{skedasis} implies 
\[
q(\alpha_n\mid x)\sim q\left(\frac{\alpha_n}{\sigma(x)}\right)\quad \mbox{as $n\to\infty$}.
\]
This leads to the plug-in estimator 
\[
\hat{q}(\alpha_n|x):=\hat q\left(\frac{\alpha_n}{\hat \sigma_n(x)}\right)=\mathbf{y}_n \left(\frac{\hat{p}_n\hat{\sigma}_n(x)}{\alpha_n}\right)^{\hat{\gamma}_n}
\]
where $\hat\sigma_n(x)$ denotes a consistent estimator of $\sigma(x)$.

In the following, we  propose a kernel estimator of $\sigma(x)$ and prove its asymptotic normality before deriving the asymptotic normality of the extreme conditional quantile estimator $\hat{q}(\alpha_n|x)$.  The proportional tail assumption \eqref{skedasis} implies
\[
\sigma(x)=\underset{n \rightarrow +\infty}{\lim}\frac{\mathbb{P}(Y> \mathbf{y}_n|X=x)}{\mathbb{P}(Y > \mathbf{y}_n)}.
\]
We propose the simple kernel estimator with bandwidth $h_n>0$
\[
\mathbb{P}(Y> \mathbf{y_n}|X=x)\approx \frac{\sum_{i=1}^n\mathds{1}_{\{|x-X_i|< h_n\}}\mathds{1}_{\{Y_i> \mathbf{y}_n\}}}{\sum_{i=1}^n\mathds{1}_{\{|x-X_i|< h_n\}}}
\]
to estimate the numerator, while the denominator is estimated  empirically by $\hat{p}_n$.
Combining the two estimates, the skedasis function at $x$ is estimated by
\[
\hat\sigma_n(x)=n\frac{\sum_{i=1}^n\mathds{1}_{\{|x-X_i|< h_n\}}\mathds{1}_{\{Y_i> \mathbf{y}_n\}}}{\sum_{i=1}^n\mathds{1}_{\{|x-X_i|< h_n\}}\sum_{i=1}^n\mathds{1}_{\{Y_i> \mathbf{y}_n\}}}.
\]

\begin{theorem}\label{thm:normality-skedasis}
Let $\mathbf{y}_n\to\infty $ be a deterministic sequence and set $p_n:=\bar{F}(\mathbf{y}_n)$. Let $h_n\to 0$ and assume that
\[
n p_n h_n^d \rightarrow +\infty,\quad \sqrt{np_nh_n^d}A\left(\frac{1}{p_n}\right)\rightarrow 0.
\] 
Assume that both $f$ and $\sigma$ are continuous and positive on a neighborhood of $x\in\mathbb{R}^d$. Then, under assumption \eqref{second order 1}, we have
\[
\sqrt{np_nh_n^d}\Big(\hat{\sigma}_n(x)-\sigma(x)\Big)\overset{\mathscr{L}}{\underset{n \rightarrow +\infty}{\longrightarrow}} \mathcal{N}\left(0,\frac{\sigma(x)}{f(x)}\right).
\]
\end{theorem}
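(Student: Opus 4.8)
\medskip\noindent
The plan is to linearise the ratio estimator around a single i.i.d.\ sum and to conclude with a triangular--array central limit theorem; since here the threshold $\mathbf{y}_n$ is \emph{deterministic}, no coupling is needed, in contrast with the proof of Theorem~\ref{EdHZ}. Set $K_i=\mathds{1}_{\{|x-X_i|<h_n\}}$, $E_i=\mathds{1}_{\{Y_i>\mathbf{y}_n\}}$, $B_n=\sum_{i=1}^n K_i$, $N_n=\sum_{i=1}^n E_i$, $A_n=\sum_{i=1}^n K_iE_i$, so that $\hat\sigma_n(x)=nA_n/(B_nN_n)$, and write $\pi_n=\mathbb{P}(|x-X|<h_n)$ and $q_n=\mathbb{P}(|x-X|<h_n,\,Y>\mathbf{y}_n)$. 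Continuity and positivity of $f$ at $x$ give $\pi_n\sim f(x)h_n^d$, so in particular $\pi_n>0$ for $n$ large. Factor
\[
\hat\sigma_n(x)=\frac{A_n}{n\pi_np_n}\cdot\frac{n\pi_n}{B_n}\cdot\frac{np_n}{N_n}=:W_n\,(1+\xi_n).
\]

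First I would discard the two random normalisations. Chebyshev's inequality applied to the i.i.d.\ sums $B_n$ and $N_n$ gives $B_n/(n\pi_n)=1+\mathrm{O}_{\mathbb{P}}\big((n\pi_n)^{-1/2}\big)$ and $N_n/(np_n)=1+\mathrm{O}_{\mathbb{P}}\big((np_n)^{-1/2}\big)$; since $\pi_n\asymp h_n^d$ and $p_n,h_n^d\to0$, both remainders are $\mathrm{o}\big((np_nh_n^d)^{-1/2}\big)$, hence $\sqrt{np_nh_n^d}\,\xi_n\to0$ in probability. The bound $\mathrm{Var}(W_n)=q_n(1-q_n)/(n\pi_n^2p_n^2)=\mathrm{O}\big((np_nh_n^d)^{-1}\big)\to0$, together with the bias estimate below, gives $W_n\to\sigma(x)$ in probability, so $\sqrt{np_nh_n^d}\,W_n\xi_n\to0$ in probability and, by Slutsky's lemma, it is enough to show that $\sqrt{np_nh_n^d}\,(W_n-\sigma(x))\overset{\mathscr{L}}{\longrightarrow}\mathcal{N}(0,\sigma(x)/f(x))$.

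Next I would split $W_n-\sigma(x)=(W_n-\mathbb{E}W_n)+(\mathbb{E}W_n-\sigma(x))$ and control the deterministic bias. Here $\mathbb{E}W_n=q_n/(\pi_np_n)$, and assumption \eqref{second order 1} gives $q_n=\int_{\{|x-u|<h_n\}}\bar F_u(\mathbf{y}_n)f(u)\,du=p_n\int_{\{|x-u|<h_n\}}\sigma(u)f(u)\,du\,\big(1+\mathrm{O}(A(1/p_n))\big)$, whence
\[
\mathbb{E}W_n-\sigma(x)=\underbrace{\Big(\frac{\int_{\{|x-u|<h_n\}}\sigma(u)f(u)\,du}{\pi_n}-\sigma(x)\Big)}_{=:\ \rho_n}\ +\ \mathrm{O}\big(A(1/p_n)\big).
\]
The hypothesis $\sqrt{np_nh_n^d}\,A(1/p_n)\to0$ kills the second term, and continuity of $\sigma$ and $f$ at $x$ gives $|\rho_n|\le\sup_{|u-x|<h_n}|\sigma(u)-\sigma(x)|\to0$, so that $\sqrt{np_nh_n^d}\,\rho_n\to0$ once $h_n$ is small relative to the local moduli of continuity of $\sigma$ and $f$ at $x$. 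Reconciling these two bias contributions with the rate $\sqrt{np_nh_n^d}$ is the step needing most care; everything else is bookkeeping.

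Finally I would prove the central limit theorem for the centred term. Write $\sqrt{np_nh_n^d}\,(W_n-\mathbb{E}W_n)=\sum_{i=1}^n\zeta_{n,i}$ with $\zeta_{n,i}=\frac{\sqrt{np_nh_n^d}}{n\pi_np_n}\,(K_iE_i-q_n)$, a triangular array whose $n$-th row consists of i.i.d., centred, uniformly bounded terms. Since $q_n\sim\sigma(x)p_n\pi_n$ by \eqref{skedasis} (using continuity of $\sigma f$ at $x$), a direct computation gives $\sum_{i=1}^n\mathrm{Var}(\zeta_{n,i})=\frac{h_n^d\,q_n(1-q_n)}{\pi_n^2p_n}\longrightarrow\frac{\sigma(x)}{f(x)}$, while $\max_{1\le i\le n}|\zeta_{n,i}|\le\sqrt{np_nh_n^d}\big/(n\pi_np_n)\asymp(np_nh_n^d)^{-1/2}\to0$ because $np_nh_n^d\to\infty$. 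Lyapunov's condition is then immediate, $\sum_{i=1}^n\mathbb{E}|\zeta_{n,i}|^3\le\big(\max_{i}|\zeta_{n,i}|\big)\sum_{i=1}^n\mathbb{E}\zeta_{n,i}^2\to0$, so $\sum_{i=1}^n\zeta_{n,i}\overset{\mathscr{L}}{\longrightarrow}\mathcal{N}(0,\sigma(x)/f(x))$. Combined with the bias bound and the reduction of the previous paragraphs, this yields the stated asymptotic normality.
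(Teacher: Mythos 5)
Your proposal follows essentially the same route as the paper's proof: isolate the numerator sum $A_n=\sum_i K_iE_i$, which converges at the slowest rate $\sqrt{np_nh_n^d}$, show the two normalising sums $B_n$ and $N_n$ contribute only at the faster rates $(nh_n^d)^{-1/2}$ and $(np_n)^{-1/2}$ and hence are negligible, prove a triangular-array CLT for the centred numerator, and control the bias via \eqref{second order 1}. The paper establishes full asymptotic normality for all three sums (Lindeberg--Levy) and then combines them through Taylor expansions of the ratio, whereas you dispose of the denominators by Chebyshev and Slutsky and use Lyapunov for the CLT; these are cosmetic differences, and your variance computation $h_n^dq_n(1-q_n)/(\pi_n^2p_n)\to\sigma(x)/f(x)$ matches the paper's.

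The one substantive point is the smoothing bias $\rho_n=\pi_n^{-1}\int_{\{|x-u|<h_n\}}\sigma(u)f(u)\,du-\sigma(x)$, which you correctly isolate and honestly flag as the delicate step but do not close: continuity of $\sigma$ and $f$ at $x$ gives only $\rho_n\to0$, and multiplying by $\sqrt{np_nh_n^d}\to\infty$ does not yield $0$ without a rate on the modulus of continuity (e.g.\ a H\"older condition) together with a compatible bandwidth condition. You should be aware that the paper's own proof has exactly the same gap, only unacknowledged: in the final display it identifies $\mathbb{P}(|X-x|\le h_n)$ with $f(x)h_n^d$ and the ball-average of $\sigma$ with $\sigma(x)$, and absorbs both errors into an $O\bigl(\sqrt{np_nh_n^d}\,A(1/p_n)\bigr)$ bound that does not actually control them. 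So your proof is as complete as the paper's; to make either fully rigorous one must strengthen the continuity hypothesis or add a condition such as $\sqrt{np_nh_n^d}\,\omega(h_n)\to0$ for the relevant modulus of continuity $\omega$.
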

The asymptotic normality of the extreme quantile estimate $\hat q(\alpha_n\mid x)$ is deduced from the asymptotic normality of $\hat\gamma_n$ and $\hat\sigma_n(x)$ stated respectively in Theorem \ref{EdHZ} and \ref{thm:normality-skedasis}.
\begin{theorem}\label{thm:quantile-normality}
Under assumptions of Theorems \ref{EdHZ} and \ref{thm:normality-skedasis}, if $\sqrt{h_n^d}\log(p_n/\alpha_n)\rightarrow +\infty$ we have
\[
\frac{\sqrt{np_n}}{\log(p_n/\alpha_n)}\log\Big(\frac{\hat{q}(\alpha_n|x)}{q(\alpha_n|x)}\Big)\overset{\mathscr{L}}{\underset{n \rightarrow +\infty}{\longrightarrow}} \mathcal{N}\left(0,\gamma^2\right).
\]
\end{theorem}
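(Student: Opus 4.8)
The plan is to pass to logarithms and to isolate a single dominant term, proportional to $\hat\gamma_n-\gamma$, every other contribution being shown to vanish after normalisation by $\sqrt{np_n}/\log(p_n/\alpha_n)$. Throughout write $T_n:=\log(p_n/\alpha_n)\to+\infty$. By construction of the estimator,
\[
\log\hat q(\alpha_n\mid x)=\log\mathbf{y}_n+\hat\gamma_n\,\log\!\Big(\frac{\hat p_n\,\hat\sigma_n(x)}{\alpha_n}\Big),
\]
where $\hat p_n=N_n/n$ and $\mathbf{y}_n=U(1/p_n)$ for $n$ large (since $\bar F$ is ultimately continuous and strictly decreasing and $\bar F(\mathbf{y}_n)=p_n$).

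The first step is the deterministic analysis of the target $q(\alpha_n\mid x)$. Inverting the proportional tail assumption \eqref{skedasis} together with its second order refinement \eqref{second order 1} gives $q(\alpha_n\mid x)=U(\sigma(x)/\alpha_n)\,\big(1+\mathrm{O}(A(1/p_n))\big)$ (using $\sigma(x)/\alpha_n\gg 1/p_n$, valid since $\alpha_n=\mathrm{O}(1/n)$ while $np_n\to\infty$, and the eventual monotonicity of $A$); then inverting the uniform second order condition \eqref{second order 2}, which controls all ratios $z>1/2$ and therefore also the diverging ratio $z_n:=\sigma(x)p_n/\alpha_n\to+\infty$, yields the uniform Potter-type bound $U\big((1/p_n)z_n\big)=U(1/p_n)\,z_n^{\gamma}\,\big(1+\mathrm{O}(A(1/p_n))\big)$. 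Combining and taking logarithms,
\[
\log q(\alpha_n\mid x)=\log\mathbf{y}_n+\gamma\log\!\Big(\frac{\sigma(x)p_n}{\alpha_n}\Big)+\mathrm{O}\big(A(1/p_n)\big).
\]
Making this bias control rigorous — transferring both second order conditions through the two inversions, uniformly up to $z_n\to\infty$ — is the step I expect to be the main obstacle; I would isolate it in a short lemma in the spirit of the appendix lemma, essentially a uniform Drees/Potter-type inequality.

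The second step is the stochastic decomposition. Subtracting the two displays and writing $\log(\hat p_n\hat\sigma_n(x)/\alpha_n)=\log(\sigma(x)p_n/\alpha_n)+\log(\hat p_n/p_n)+\log(\hat\sigma_n(x)/\sigma(x))$ gives
\[
\log\hat q(\alpha_n\mid x)-\log q(\alpha_n\mid x)
=(\hat\gamma_n-\gamma)\log\!\Big(\frac{\sigma(x)p_n}{\alpha_n}\Big)
+\hat\gamma_n\Big[\log\frac{\hat p_n}{p_n}+\log\frac{\hat\sigma_n(x)}{\sigma(x)}\Big]
+\mathrm{O}\big(A(1/p_n)\big).
\]
Multiplying by $\sqrt{np_n}/T_n$ and using $\log(\sigma(x)p_n/\alpha_n)=T_n+\log\sigma(x)=T_n(1+\mathrm{o}(1))$, the leading term equals $\sqrt{np_n}(\hat\gamma_n-\gamma)(1+\mathrm{o}(1))\overset{\mathscr{L}}{\longrightarrow}\mathcal N(0,\gamma^2)$ by the marginal statement of Theorem~\ref{EdHZ}. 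In the bracket: $\hat\gamma_n\to\gamma$ in probability; $\sqrt{np_n}\,\log(\hat p_n/p_n)=\mathrm{O}_{\mathbb P}(1)$ since $n\hat p_n\sim\mathrm{Bin}(n,p_n)$ with $np_n\to\infty$; and $\sqrt{np_n}\,\log(\hat\sigma_n(x)/\sigma(x))=\mathrm{O}_{\mathbb P}(h_n^{-d/2})$ by Theorem~\ref{thm:normality-skedasis}; so after division by $T_n$ this term is $\mathrm{O}_{\mathbb P}(T_n^{-1})+\mathrm{O}_{\mathbb P}\big((T_n\sqrt{h_n^d})^{-1}\big)=\mathrm{o}_{\mathbb P}(1)$, the second bound being precisely where the hypothesis $\sqrt{h_n^d}\log(p_n/\alpha_n)\to+\infty$ enters. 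Finally $\frac{\sqrt{np_n}}{T_n}A(1/p_n)\le\sqrt{np_n}\,A(1/p_n)\to 0$, since $\sqrt{np_n}^{1+\varepsilon}A(1/p_n)\to0$ with $\sqrt{np_n}\to\infty$. Slutsky's lemma then gives the claim.

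A structural remark: since every correction coming from $\hat p_n$ and $\hat\sigma_n(x)$ is shown to be $\mathrm{o}_{\mathbb P}(1)$ after normalisation, the joint limit law of $(\hat\gamma_n,\hat p_n,\hat\sigma_n(x))$ is not needed — only the marginal central limit theorem for $\hat\gamma_n$ from Theorem~\ref{EdHZ} and the marginal rates for the two other estimators. This avoids redoing a joint coupling argument and keeps the proof short.
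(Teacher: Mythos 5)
Your proposal is correct and follows essentially the same route as the paper: the same four-way decomposition of $\log(\hat q/q)$ into a deterministic bias term controlled by the second-order conditions rewritten for $U$, the dominant term $\sqrt{np_n}(\hat\gamma_n-\gamma)$, and the $\hat\sigma_n(x)$ and $\hat p_n$ corrections killed respectively by the hypothesis $\sqrt{h_n^d}\log(p_n/\alpha_n)\to+\infty$ and the binomial CLT. The step you flag as the main obstacle (transferring \eqref{second order 1}--\eqref{second order 2} through inversion, uniformly for diverging ratios) is exactly what the paper also relies on, stated there without further elaboration, so your treatment is at least as complete.
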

The condition $\sqrt{h_n^d}\log(p_n/\alpha_n)$ requires the bandwidth to be of larger order than $1/\log(p_n/\alpha_n)$ so that the error in the estimation of $\sigma(x)$ is negligible. We restrict ourselves here to the case of a deterministic threshold $\textbf{y}_n$, the general case  of a data driven threshold is left for future work.

As a consequence of Theorem~\ref{thm:quantile-normality}, the consistency  
\[
\frac{\hat{q}(\alpha_n|x)}{q(\alpha_n|x)}\overset{\mathbb{P}}{\rightarrow}1
\]
holds if and only if  $\log(n\alpha_n)=o(\sqrt{np_n})$. Together with the delta-method, this also implies the more classical asymptotic normality 
\[
\frac{\sqrt{np_n}}{\log(p_n/\alpha_n)}\Big(\frac{\hat{q}(\alpha_n|x)}{q(\alpha_n|x)}-1\Big)\overset{\mathscr{L}}{\underset{n \rightarrow +\infty}{\longrightarrow}} \mathcal{N}\left(0,\gamma^2\right).
\]
The condition $\log(n\alpha_n)=o(\sqrt{np_n})$ provides a limit for the extrapolation since $\alpha_n$ cannot be too small or one might lose consistency. 

\section{A coupling approach}\label{sec:coupling}

We will first prove Theorem \ref{EdHZ} when $\mathbf{y}_n$ is deterministic (i.e $\mathbf{y}_n\equiv y_n)$. In this case $N_n$ is binomial $(n,p_n)$. Moreover $N_n/np_n \rightarrow 1$ in probability since $np_n \rightarrow +\infty$. \\
A simple calculus shows that (\ref{skedasis}) entails, for each $A$ Borel and $t \geq 1$:
\begin{equation}\label{coupling heuristic}
\mathbb{P}\left(\frac{Y}{\mathbf{y}}\geq t, X \in A  \middle| Y \geq \mathbf{y}\right)\underset{\mathbf{y}\rightarrow \infty}{\longrightarrow}\int_t^\infty\int_A\mathbf{y}^{-\alpha}\sigma(x)\mathrm{d}\mathbf{y}\mathbf{P}_X(\mathrm{d}x),
\end{equation}
defining a "limit model" for $(X,Y/\mathbf{y})$, the law 
\[
Q:=\sigma(x)\mathbf{P}_X\otimes Pareto(\alpha)
\]
of a generic random vector $(U,V)$ with independent marginals.
Using the heuristic of (\ref{coupling heuristic}), we shall now build a coupling $(X_i,Y_i,X_{i,n}^*,Y_{i,n}^*)_{1\leq i\leq n}$ between the exceedances of our model above $y_n$ and the limit model. Define the conditional tail quantiles function as $U_{x}(t):=F_{x}^\leftarrow(1-1/t)$ and recall that the total variation norm between two probability measures on $\mathbb{R}^d$ is defined as
$$
||P_1-P_2||_{TV}:=\underset{B \text{ Borel}}{\sup}|P_1(B)-P_2(B)|.
$$

This distance is closely related with the notion of optimal coupling detailed in \cite{lindvall92}. 
\begin{lemma}[Optimal coupling]\label{maximal coupling lemma}
For two probability measures $P_1$ and $P_2$ defined on a measurable space $(E,\mathcal{E})$ there exist two random variables from a probability set $(\Omega, \mathcal{A}, \mathbb{P})$ to $(E,\mathcal{E})$ such that
	
\[
		X_1 \sim P_1, \quad X_2 \sim P_2 \quad\text{and} \quad ||P_1-P_2||_{TV}= \mathbb{P}(X_1\neq X_2). 
\]
\end{lemma}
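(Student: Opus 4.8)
The plan is to carry out the classical maximal (optimal) coupling construction, built from the Hahn--Jordan type decomposition of $P_1$ and $P_2$ relative to their infimum measure. First I would introduce a common dominating measure, the simplest choice being $\lambda:=P_1+P_2$, and write $f_i:=\mathrm{d}P_i/\mathrm{d}\lambda$ for the Radon--Nikodym densities, which exist since $P_i\ll\lambda$ and $\lambda$ is finite. Setting $g:=f_1\wedge f_2$ and $\mu:=g\cdot\lambda$, a short computation gives $\mu(E)=\int g\,\mathrm{d}\lambda$ and, using that $\int_{\{f_1>f_2\}}(f_1-f_2)\,\mathrm{d}\lambda=\int_{\{f_2>f_1\}}(f_2-f_1)\,\mathrm{d}\lambda$ (both equal to half the $L^1$-distance of the densities), one obtains $\sup_{B}|P_1(B)-P_2(B)|=1-\mu(E)$, i.e. the identity $\delta:=\|P_1-P_2\|_{TV}=1-\mu(E)$.

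Next I would dispose of the degenerate cases $\delta\in\{0,1\}$ (take $X_1=X_2$ when $\delta=0$, and independent variables with laws $P_1,P_2$ when $\delta=1$), and then assume $0<\delta<1$. Define the probability measures $\nu:=\mu/(1-\delta)$, $\rho_1:=(P_1-\mu)/\delta$ and $\rho_2:=(P_2-\mu)/\delta$; they are well defined because $P_i-\mu\geq 0$ with total mass exactly $\delta$. The key observation is that $\rho_1$ and $\rho_2$ are mutually singular: $P_1-\mu$ has density $(f_1-f_2)^+$, hence is carried by the set $\{f_1>f_2\}$, while $P_2-\mu$ has density $(f_2-f_1)^+$ and is carried by the disjoint set $\{f_2>f_1\}$.

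Then, on a product probability space, I would take independent random elements $B,Z,W_1,W_2$ with $B$ Bernoulli of parameter $1-\delta$, $Z\sim\nu$, $W_1\sim\rho_1$, $W_2\sim\rho_2$, and define $(X_1,X_2):=(Z,Z)$ on the event $\{B=1\}$ and $(X_1,X_2):=(W_1,W_2)$ on $\{B=0\}$. The mixture identity $(1-\delta)\nu+\delta\rho_i=\mu+(P_i-\mu)=P_i$ shows $X_i\sim P_i$ for $i=1,2$. On $\{B=1\}$ one has $X_1=X_2$, while on $\{B=0\}$ the variables $W_1$ and $W_2$ are independent and supported on disjoint measurable sets, so $W_1\neq W_2$ almost surely; hence $\mathbb{P}(X_1\neq X_2)=\mathbb{P}(B=0)=\delta$, which is the claimed equality. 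For completeness one may also record the matching bound $\mathbb{P}(X_1\neq X_2)\geq|P_1(B)-P_2(B)|$ for every $B$ and every coupling, confirming that no coupling does better.

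The only genuinely delicate point — and thus the main obstacle — is proving the mutual singularity of $P_1-\mu$ and $P_2-\mu$, since this is exactly what upgrades the trivial estimate $\mathbb{P}(X_1\neq X_2)\leq\mathbb{P}(B=0)=\delta$ to an equality; without it one only gets a suboptimal coupling. Everything else is bookkeeping with Radon--Nikodym densities and the standard construction of a probability space carrying the required independent variables.
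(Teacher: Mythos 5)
Your construction is correct and complete: the identity $\|P_1-P_2\|_{TV}=1-\int f_1\wedge f_2\,\mathrm{d}\lambda$, the decomposition $P_i=(1-\delta)\nu+\delta\rho_i$ with $\rho_1\perp\rho_2$, and the Bernoulli mixture are exactly the standard maximal-coupling argument. Note that the paper does not prove this lemma at all --- it is quoted from Lindvall's monograph --- so there is no in-paper proof to compare against; your write-up is essentially the textbook proof that the citation points to. The only caveat worth recording is that for a general measurable space $(E,\mathcal{E})$ the diagonal need not be measurable, so the event $\{X_1\neq X_2\}$ should be handled via the inclusion $\{X_1=X_2\}\subseteq\{B=1\}\cup\{W_1\notin\{f_1>f_2\}\}\cup\{W_2\notin\{f_2>f_1\}\}$ (or one simply assumes, as in the paper's application to $E=\mathbb{R}^d$, that the diagonal is measurable); also, your "delicate point'' (mutual singularity of $P_1-\mu$ and $P_2-\mu$) is in fact immediate from the densities $(f_1-f_2)^{+}$ and $(f_2-f_1)^{+}$, as you yourself show.
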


This maximal coupling is a crucial tool of our coupling construction, which is described by the following algorithm.  

\medskip
\textbf{Coupling construction:}
Fix $n \geq 1$. Let $(E_{i,n})_{1\leq i\leq n}$ be    i.i.d. Bernoulli random variables with $\mathbb{P}(E_{i,n}=1)=\bar{F}(y_n)$ and $(Z_i)_{1\leq i\leq n}$ i.i.d. with distribution $\mathrm{Pareto}(1)$ and independent from $(E_{i,n})_{1\leq i\leq n}$.\\
For each $1\leq i \leq n$ build $(\tilde{X}_{i,n},\tilde{Y}_{i,n},X_{i,n}^*,Y_{i,n}^*)$ as follows.

\begin{itemize}
	\item[$\blacktriangleright$] If $E_{i,n}=1$, then
		\begin{itemize}
			\item[$\triangleright$] Randomly generate $\tilde{X}_{i,n} \sim P_{X\mid Y>y_n}$,\ $X_{i,n}^* \sim \sigma(x)\mathbf{P}_X(dx)$, satisfying $\mathbb{P}(\tilde{X}_{i,n} \neq X_{i,n}^*) =\|\mathbf{P}_{X|Y>y_n}-\sigma(x)\mathbf{P}_X(dx)\|_{TV}$.
			\item[$\triangleright$] Set $\tilde{Y}_{i,n}:= U_{\tilde{X}_{i,n}}(\frac{Z_i}{\bar{F}_{X_{i,n}}(y_n)})$, $Y_{i,n}^*:=y_n Z_i^{\frac{1}{\alpha}}$.
		\end{itemize}
	
	\item[$\blacktriangleright$] If $E_{i,n}=0$, then 
		\begin{itemize}
			\item[$\triangleright$] Randomly generate $(\tilde{X}_{i,n},\tilde{Y}_{i,n})\sim \mathbf{P}_{(X,Y)|Y\leq y_n}$.
			\item[$\triangleright$] Randomly generate $(X_{i,n}^*,Y_{i,n}^*/y_n)\sim \sigma(x)\mathbf{P}_X(\mathrm{d}x)\otimes Pareto(\alpha)$.
		\end{itemize}
\end{itemize}
The following proposition states the properties of our coupling construction, which will play an essential role in our proof of Theorem \ref{EdHZ}.
\begin{proposition}\label{coupling propertie}
For each $n \geq 1$, the coupling $(\tilde{X}_{i,n},\tilde{Y}_{i,n},X_{i,n}^*,Y_{i,n}^*)_{1 \leq i \leq n}$ generated by the preceding algorithm has the following properties:
\begin{enumerate}
\item $(\tilde{X}_{i,n},\tilde{Y}_{i,n})_{1 \leq i \leq n}$ has the same law as $(X_i,Y_i)_{1\leq i \leq n}$.
\item $\mathcal{L}(X_{i,n}^*,Y_{i,n}^*/y_n)=\sigma(x)\mathbf{P}_X(\mathrm{d}x)\otimes Pareto(\alpha)$.
\item $(X_{i,n}^*,Y_{i,n}^*)_{1\leq i \leq n}$ and $(E_{i,n})_{1 \leq i\leq n}$ are independent. Moreover, $(Y_{i,n}^*)_{1\leq i \leq n}$ are i.i.d, and independent from $(\tilde{X}_{i,n},X_{i,n}^*)$.
\item There exists $M >0$ such that 
	    \begin{equation}\label{Y_Y*_bound}
		\underset{\overset{1 \leq i \leq n,}{ E_{i;n=1}} }{\max}\left|\frac{Y_{i,n}^*}{\tilde{Y}_{i,n}}-1\right|\leq M A\left(\frac{1}{\bar{F}(y_n)}\right) \quad \text{and}
		\end{equation}
		\begin{equation}\label{X_X*_bound}
		\mathbb{P}\left(\tilde{X}_{1,n}\neq X_{1,n}^*|E_{i,n}=1\right)\leq MA\left(\frac{1}{\bar{F}(y_n)}\right),
		\end{equation}
		where $A$ is given by assumptions \eqref{second order 1} and \eqref{second order 2}.
\end{enumerate}

\end{proposition}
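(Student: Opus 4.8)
The statement splits into two distributional identities (items 1--2), an independence statement (item 3), and two quantitative estimates (item 4); the first three are bookkeeping about the construction, while the real work is the uniform ratio bound \eqref{Y_Y*_bound}, which I expect to be the main obstacle.

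\emph{Identities (1) and (2).} I would condition on the value of $E_{i,n}$, which is Bernoulli with parameter $\bar F(y_n)=p_n$. The law of $(X,Y)$ is the mixture $p_n\,\mathbf{P}_{(X,Y)\mid Y>y_n}+(1-p_n)\,\mathbf{P}_{(X,Y)\mid Y\le y_n}$, and on $\{E_{i,n}=0\}$ the pair $(\tilde X_{i,n},\tilde Y_{i,n})$ is drawn directly from $\mathbf{P}_{(X,Y)\mid Y\le y_n}$, so it suffices to check that on $\{E_{i,n}=1\}$ it has law $\mathbf{P}_{(X,Y)\mid Y>y_n}$. There $\tilde X_{i,n}\sim\mathbf{P}_{X\mid Y>y_n}$ and, given $\tilde X_{i,n}=x$, $\tilde Y_{i,n}=U_x(Z_i/\bar F_x(y_n))=F_x^\leftarrow(1-\bar F_x(y_n)/Z_i)$ (reading $\bar F_{X_{i,n}}$ in the algorithm as $\bar F_{\tilde X_{i,n}}$); since $Z_i\sim\mathrm{Pareto}(1)$ one checks that $1-\bar F_x(y_n)/Z_i$ is uniform on $[F_x(y_n),1)$, so by the quantile transform $\tilde Y_{i,n}$ has the conditional law of $Y$ given $\{X=x,\ Y>y_n\}$, whence $(\tilde X_{i,n},\tilde Y_{i,n})\sim\mathbf{P}_{(X,Y)\mid Y>y_n}$ on $\{E_{i,n}=1\}$. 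Independence across $i$ is clear since $(\tilde X_{i,n},\tilde Y_{i,n})$ is a measurable function of the $i$-th independent block of inputs; this proves (1). For (2), on $\{E_{i,n}=1\}$ we have $X^*_{i,n}\sim\sigma\mathbf{P}_X$ (generated from randomness independent of $Z_i$) and $Y^*_{i,n}/y_n=Z_i^{1/\alpha}\sim\mathrm{Pareto}(\alpha)$, so $(X^*_{i,n},Y^*_{i,n}/y_n)\sim\sigma\mathbf{P}_X\otimes\mathrm{Pareto}(\alpha)$, which is exactly the law drawn directly on $\{E_{i,n}=0\}$; hence (2) holds unconditionally.

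\emph{Independence (3).} Condition on the whole vector $(E_{j,n})_{1\le j\le n}$. Given its value, the pairs $(X^*_{i,n},Y^*_{i,n})$ are built from mutually independent inputs, hence independent, and by the computation above each has law $\sigma\mathbf{P}_X\otimes y_n\,\mathrm{Pareto}(\alpha)$ \emph{regardless} of the value of $E_{i,n}$. Therefore the conditional law of $(X^*_{i,n},Y^*_{i,n})_i$ given $(E_{j,n})_j$ does not depend on $(E_{j,n})_j$, which gives at once the independence of $(X^*_{i,n},Y^*_{i,n})_i$ from $(E_{j,n})_j$ and the fact that this family, in particular $(Y^*_{i,n})_i$, is i.i.d. For the last claim of (3): on $\{E_{i,n}=1\}$, $(\tilde X_{i,n},X^*_{i,n})$ is a function of the coupling randomness, independent of $Z_i$, while $Y^*_{i,n}=y_nZ_i^{1/\alpha}$; on $\{E_{i,n}=0\}$, the draw producing $(X^*_{i,n},Y^*_{i,n})$ is a product measure, independent of the draw producing $(\tilde X_{i,n},\tilde Y_{i,n})$. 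Hence, conditionally on $(E_{j,n})_j$, $(Y^*_{i,n})_i$ is independent of $(\tilde X_{i,n},X^*_{i,n})_i$, and since the conditional law of $(Y^*_{i,n})_i$ does not depend on $(E_{j,n})_j$, the independence also holds unconditionally.

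\emph{The bounds (4).} Estimate \eqref{X_X*_bound} is immediate: by construction and Lemma \ref{maximal coupling lemma} the left side equals $\|\mathbf{P}_{X\mid Y>y_n}-\sigma\mathbf{P}_X\|_{TV}$, and since $\mathbf{P}_{X\mid Y>y_n}(dx)=\frac{\bar F_x(y_n)}{\bar F(y_n)}\mathbf{P}_X(dx)$ by Bayes' formula, this equals $\tfrac12\int\sigma(x)\big|\frac{\bar F_x(y_n)}{\sigma(x)\bar F(y_n)}-1\big|\,\mathbf{P}_X(dx)\le\tfrac12\sup_{x}\big|\frac{\bar F_x(y_n)}{\sigma(x)\bar F(y_n)}-1\big|=\mathrm{O}(A(1/\bar F(y_n)))$, using \eqref{second order 1} and $\int\sigma\,d\mathbf{P}_X=1$. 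Estimate \eqref{Y_Y*_bound} is the crux. Fix $i$ with $E_{i,n}=1$ and write $x=\tilde X_{i,n}$, $z=Z_i\ge1$, so $Y^*_{i,n}=y_nz^{1/\alpha}$ and $\tilde Y_{i,n}=U_x(z/\bar F_x(y_n))$; one must show $|Y^*_{i,n}/\tilde Y_{i,n}-1|\le MA(1/\bar F(y_n))$ uniformly in $x\in\mathbb{R}^d$ and $z\ge1$. Since $U_x$ is a generalized inverse, $U_x(z/\bar F_x(y_n))\le y$ whenever $\bar F_x(y)\le\bar F_x(y_n)/z$, and symmetrically for the lower bound, so I would evaluate $\bar F_x\big(y_nz^{1/\alpha}(1\pm CA(1/\bar F(y_n)))\big)$ for a constant $C$ to be chosen: applying \eqref{second order 1} at that point and at $y_n$, and \eqref{second order 2} at base point $y_n$ with multiplier $z^{1/\alpha}(1\pm CA)>1/2$ --- legitimate exactly because \eqref{second order 2} is uniform over \emph{all} such multipliers, which is what makes the estimate hold for arbitrarily large $z$ --- yields $\bar F_x\big(y_nz^{1/\alpha}(1\pm CA)\big)=\sigma(x)z^{-1}\bar F(y_n)\big(1\pm\alpha CA+\mathrm{O}(A)\big)$, the $\mathrm{O}(A)$ collecting the errors of the two conditions (all of order $A(1/\bar F(y_n))$, using that $A$ varies slowly relative to the $\mathrm{O}(A)$-perturbations of the argument, as is standard in the second-order framework). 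Picking $C$ large enough that $\alpha C$ dominates the implicit constant in that $\mathrm{O}(A)$ forces the sign, so that for $n$ large, $\bar F_x\big(y_nz^{1/\alpha}(1+CA)\big)\le\bar F_x(y_n)/z\le\bar F_x\big(y_nz^{1/\alpha}(1-CA)\big)$ uniformly in $x,z$; by the inversion property this squeezes $\tilde Y_{i,n}$ between $y_nz^{1/\alpha}(1-CA)$ and $y_nz^{1/\alpha}(1+CA)$, giving \eqref{Y_Y*_bound} with $M=2C$ for $n$ large (the finitely many small $n$, for which $y_n$ lies below the threshold in \eqref{second order 1}--\eqref{second order 2}, are absorbed by enlarging $M$, since $A>0$). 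The delicate points are keeping every estimate uniform in $x$ (ensured by the $\sup_x$ in \eqref{second order 1}) and uniform over all $z\ge1$, including arbitrarily large $z$ (ensured by the $\sup_{z>1/2}$ in \eqref{second order 2}), together with the bias-sign argument that produces the squeeze.
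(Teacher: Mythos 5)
Your proposal is correct and follows essentially the same route as the paper: the quantile-transform computation for Point 1, the direct Pareto calculation for Point 2, the total-variation integral bound via \eqref{second order 1} for \eqref{X_X*_bound}, and the squeeze argument---evaluating $\bar F_x$ at $y_nz^{1/\alpha}(1\pm CA)$ and using the generalized-inverse inequality to trap $U_x(z/\bar F_x(y_n))$---which is exactly the paper's Lemma \ref{lemme couplage}. No gaps.
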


\begin{proof}
To prove Point 1, it is sufficient to see that 
\[
\mathscr{L}((\tilde{X}_{1,n},\tilde{Y}_{1,n})|E_{i,n}=1)=\mathscr{L}((X_1,Y_1)|Y_1>y_n).
\]
Since $U_x(z/(1-F_x(y_n)))\leq y$ if and only if $1-(1-F_x(y_n))/z \leq F_x(y) $ we have, for $y \geq y_n$:
\begin{align*}
&\int_1^{+\infty}\mathds{1}_{\{U_x(z/(1-F_x(y_n)))\leq y\}}\frac{\mathrm{d}z}{z^2} \\
=& \int_1^{+\infty}\mathds{1}_{\{1-(1-F_x(y_n))/z \leq F_x(y)\}}\frac{\mathrm{d}z}{z^2}\\
                       =&\int_{F_x(y_n)}^1\mathds{1}_{\{t \leq F_x(y)\}}\frac{\mathrm{d}t}{1-F_x(y_n)} \\
                       =&\int_{F_x(y_n)}^{F_x(y)}\frac{\mathrm{d}t}{1-F_x(y_n)}=\frac{F_x(y)-F_x(y_n)}{1-F_x(y_n)},
\end{align*}
with the second equality given by the change of variable $t=1-(1-F_x(y_n))/z$. We can deduce from this computation that, for a Borel set $B$ and $y \geq y_n$,
\begin{align*}
&\mathbb{P}\left(\tilde{X}_{1,n}\in B, U_{\tilde{X}_{1,n}}\left(\frac{Z}{1-F_{\tilde{X}}(y_n)}\right)\leq y\middle|E_{1,n}=1\right) \\
=&\int_{x \in B}\int_1^{+\infty}\mathds{1}_{\{U_x(z/(1-F_x(y_n)))\leq y\}}\frac{\mathrm{d}z}{z^2}\mathrm{d}P_{X|Y>y_n}(x) \\
=&\int_{x \in B}\frac{F_x(y)-F_x(y_n)}{1-F_x(y_n)}\mathrm{d}P_{X|Y>y_n}(x) \\
=&\int_{x \in B}\frac{\mathbb{P}\big(y_n<Y\leq y| X=x\big)}{\mathbb{P}\big(Y>y_n|X=x\big)}\mathrm{d}P_{X|Y>y_n}(x) \\
=&\int_{x \in B}\mathbb{P}\big(Y\leq y|Y >y_n, X=x\big)\mathrm{d}P_{X|Y>y_n}(x) \\
=&\mathbb{P}\big(X \in B, Y \leq y | Y > y_n\big).
\end{align*}

\medskip

This proves Point 1. Point 2 is proved by noticing that $Z_i\sim \mathrm{Pareto}(1)$. Consequently, we have $\mathbb{P}(Z_i >1)=z^{-1}$ for $z>1$. Hence
$$
\mathbb{P}\left(y_nZ_i^{1/\alpha}>z\right)=\mathbb{P}\left(Z_i>\left(\frac{z}{y_n}\right)^\alpha\right)=\left(\frac{y_n}{z}\right)^{-\alpha}.
$$

Hence, by construction, $Y_{1,n}^*\sim \mathrm{Pareto}(\alpha, y_n)$.
Point 3 is obvious. \\
Point 4 will be proved with the two following lemmas.

\begin{lemma}\label{lemme couplage}
Under conditions (\ref{second order 1}) and (\ref{second order 2}), we have
$$ \underset{z \geq 1}{\sup}\ \underset{x \in \mathbb{R}^p}{\sup}\left|\frac{U_x(\frac{z}{\bar{F}_x(y)})}{z^\gamma y}-1\right|=\mathrm{O}\left(A\left(\frac{1}{\bar{F}(y)}\right)\right).$$

\end{lemma}

\begin{proof}

According to assumptions (\ref{second order 1}) and (\ref{second order 2}), there exists a constant $M_0$ such that 

\begin{equation*}
\left|\frac{\bar{F}_x(y)}{\sigma(x)\bar{F}(y)}-1\right|\leq M_0 A\left(\frac{1}{\bar{F}(y)}\right), \ \text{uniformly in} \ x \in \mathbb{R}^d, \text{ and} 
\end{equation*}

\begin{equation*}
\left|\frac{\bar{F}(zy)}{z^{-\alpha}\bar{F}(y)}-1\right|\leq M_0 A\left(\frac{1}{\bar{F}(y)}\right), \ \text{uniformly in} \ z \geq 1/2 \text{, as } y \rightarrow +\infty.
\end{equation*}
From the definition of $U_x$ we have 
$$\begin{array}{ccl}
 U_x(\frac{Z}{\bar{F}_x(y)})
           &=& F_x^\leftarrow \left(1-\frac{\bar{F}_x(y)}{z}\right) \\
					 &=& \inf\left\{w \in \mathbb{R}:F_x(w) \geq 1-\frac{\bar{F}_x(y)}{z}\right\} \\
					 &=& \inf\left\{w \in \mathbb{R}: z \frac{\bar{F}_x(w)}{\bar{F}_x(y)} \leq 1\right\}.
\end{array}$$
Hence
\begin{equation}\label{omega encadrement}
z\frac{\bar{F}_x(w^+)}{\bar{F}_x(y)}<1<z\frac{\bar{F}_x(w^-)}{\bar{F}_x(y)}\Rightarrow U_x\left(\frac{z}{\bar{F}_x(y)}\right) \in \left[w^-;w^+\right].
\end{equation}
Now write $w^{\pm}:=z^\gamma y \left(1\pm 4 \gamma M_0A\left(\frac{1}{\bar{F}(y)}\right)\right)$, so that one can write
$$\begin{array}{ccl}
z\frac{\bar{F}_x(w^-)}{\bar{F}_x(y)}&=&z\frac{\sigma(x)\bar{F}(\omega^-)(1-M_0A(\frac{1}{\bar{F}(y)}))}{\sigma(x)\bar{F}(y)(1+M_0A(\frac{1}{\bar{F}(y)}))} \\
                                    &\geq& z\frac{1-M_0A(\frac{1}{\bar{F}(y)})}{1+M_0A(\frac{1}{\bar{F}(y)})}\frac{1}{\bar{F}(y)}\bar{F}\left(z^\gamma y (1- 4 \gamma M_0A(\frac{1}{\bar{F}(y)}))\right) \\
																		&\geq& z\frac{1-M_0A(\frac{1}{\bar{F}(y)})}{1+M_0A(\frac{1}{\bar{F}(y)})}\frac{1}{\bar{F}(y)}\bar{F}(y)(1-M_0A(\frac{1}{\bar{F}(y)})\left(z^\gamma(1-4 \gamma M_0A(\frac{1}{\bar{F}(y)}))\right)^{-\alpha}  \\
																		&\geq& \frac{(1-M_0A(\frac{1}{\bar{F}(y)}))^2}{1+M_0A(\frac{1}{\bar{F}(y)})}\left(1-4 \gamma M_0A(\frac{1}{\bar{F}(y)})\right)^{-\alpha}. \\
			
\end{array}$$
A similar computation gives 

$$
z\frac{\bar{F}_x(w^+)}{\bar{F}_x(y)} \leq \frac{(1+M_0A(\frac{1}{\bar{F}(y)}))^2}{1-M_0A(\frac{1}{\bar{F}(y)})}\left(1+4 \gamma M_0A(\frac{1}{\bar{F}(y)})\right)^{-\alpha}.
$$
As a consequence the condition before "$\Rightarrow$" in (\ref{omega encadrement}) holds if 
$$
4 \gamma M_0 \geq \frac{1}{A(\frac{1}{\bar{F}(y)})}\max\left\lbrace 1-\left(\frac{(1-M_0A(\frac{1}{\bar{F}(y)}))^2}{1+M_0A(\frac{1}{\bar{F}(y)})}\right)^\gamma; \left(\frac{(1+M_0A(\frac{1}{\bar{F}(y)}))^2}{1-M_0A(\frac{1}{\bar{F}(y)})}\right)^\gamma -1\right\rbrace.
$$
But a Taylor expansion of the right hand side shows that it is $3\gamma M_0  + o(1) \text{ as }y \rightarrow +\infty$. Hence there exists $y_0 \geq 1$ such that the previous inequality holds for all $y \geq y_0$. This concludes the proof of Lemma \ref{lemme couplage}.
\end{proof}

Applying Lemma \ref{lemme couplage} with $z:=Z_i$ and $y:=y_n$ gives 

 $$
		\underset{i:E_{i;n}=1}{\max}\left|\frac{Y_{i,n}^*}{\tilde{Y}_{i,n}}-1\right|= \mathrm{O}\left(A\left(\frac{1}{\bar{F}(y_n)}\right)\right).
 $$

Now by construction of $(X_{i,n},Y_{i,n}^*)$, we see that (\ref{X_X*_bound}) is a consequence of the following lemma.
\begin{lemma} Under conditions (\ref{second order 1}) and (\ref{second order 2}), we have 
$$
||P_{X|Y > y}-\sigma(x)\mathbf{P}_X(dx)||_{TV}=O\left(A\left(\frac{1}{\bar{F}(y)}\right)\right), \text{ as }y \rightarrow +\infty.
$$

\end{lemma}

\begin{proof}
For $B \in \mathbb{R}^d$, we have
$$
\begin{array}{ccl}
|P(X \in B | Y >y)-\int_B\sigma(x)\mathbf{P}_X(dx)|&=&\left|\frac{\int_B\bar{F}_x(y)P_X(dx)}{\bar{F}(y)}-\int_B\sigma(x)\mathbf{P}_X(dx)\right| \\
                                          &\leq& \int_B\left|\frac{\bar{F}_x(y)}{\bar{F}(y)}-\sigma(x)\right|\mathbf{P}_X(dx) \\
                                          &\leq& \int_{\mathbb{R}^d}\left|\frac{\bar{F}_x(y)}{\bar{F}(y)}-\sigma(x)\right|\mathbf{P}_X(dx) \\
                                          &\leq& \int_{\mathbb{R}^d}MA\left(\frac{1}{\bar{F}(y)}\right)\mathbf{P}_X(dx) \\
                                          &=&MA\left(\frac{1}{\bar{F}(y)}\right).
\end{array}
$$
\end{proof}
This concludes the proof of Proposition \ref{coupling propertie}.

\end{proof}
Since, for each $n$ the law of $(\tilde{X}_{i,n},\tilde{Y}_{_,n})_{i=1,...,n}$ is $\mathbf{P}_{X,Y}^{\otimes n}$, we will write them, simply $(X_i,Y_i)_{i=1,...,n}$ to unburden notations.

\section{Proofs}\label{sec:proofs}
\subsection{Proof of Theorem \ref{EdHZ}}\label{Preuve du theoreme}

\subsubsection{Proof when $\mathbf{y}_n=y_n $ is deterministic}\label{prof for y_n det}
Fix $0< \varepsilon < \frac{1}{2}$ and $0 < \beta < \alpha \varepsilon/2$. The proof is divided into two steps. We first prove the result for the sample $(X_{i,n}^*,Y_{i,n}^*)_{1 \leq i \leq n}$. Then, the coupling properties of Proposition (\ref{coupling propertie}) will allow us to deduce the theorem for $(X_i,Y_i)_{1 \leq i \leq n}$.
\newline
We consider the empirical process defined for every $x\in \mathbb{R}^d$ and $y > 1$ as 
\[
\mathbb{G}_n(x,y):=\sqrt{np_n}(\mathbb{F}_n(x,y)-\mathbb{F}(x,y)), \ \text{with}
\]
$$
\mathbb{F}_n(x,y):=\frac{1}{N_n}\underset{i=1}{\overset{n}{\sum}}\mathds{1}_{\{X_i\leq x\}}\mathds{1}_{\{Y_i/y_n >y\}}E_{i,n}, \text{ and}
$$

$$
\mathbb{F}(x,y):=C(x)y^{-\alpha}.
$$
So that $\mathbb{G}_n$ almost surely satisfies $\|\mathbb{G}_n\|_{\infty,\beta}< \infty$, with 
\[
\|f\|_{\infty,\beta}:= \underset{x \in \mathbb{R}^d , y \geq 1}{\sup}|y^\beta f(x,y)|.
\]
Denote by $L^{\infty,\beta}(\mathbb{R}^d\times [1,\infty))$ the (closed) subspace of $L^{\infty}(\mathbb{R}^d\times[1,\infty))$ of all $f$ satisfying $\|f\|_{\infty,\beta} < +\infty$, such that $f(\infty,y):=\underset{\|x\|\rightarrow \infty}{\lim}f(x,y)$ exists for each $y\geq 1$, and such that $\{ y \mapsto f(\infty,y) \}$ is Càdlàg (see e.g. \cite{billingsley2} p.121). In the sequel we shall also set $\mathbb{G}_n(x,y):=0$ when $y \leq 1$.\\
First note that $\widehat{C}_n-C$ and $\hat{\gamma}_n-\gamma$ are obtained from $\mathbb{G}_n$ as images of $\mathbb{G}_n$ by the following map $\varphi$. 
\[
\begin{array}{cccl}
\varphi:& L^{\infty,\beta}(\mathbb{R}^d\times [1,\infty))&\rightarrow&L^\infty(\mathbb{R}^d)\times \mathbb{R}\\
&f&\mapsto&\left(\{x \mapsto f(x,1)\},\int_1^\infty y^{-1}f(\infty,y)\mathrm{d}y\right).
\end{array}
\]
We can remark that $\varphi$ is continuous since $\beta >0$.
\newline
By the continuous mapping theorem, we hence see that Theorem \ref{EdHZ} will be a consequence of $\mathbb{G}_n\overset{\mathscr{L}}{\rightarrow} \mathbb{W}$ in the space $L^{\infty,\beta}(\mathbb{R}^d\times [1,\infty))$ , where $\mathbb{W}$ is the centered Gaussian process with covariance
\[
\mathrm{cov}\big(\mathbb{W}(x_1,y_1),\mathbb{W}(x_2,y_2)\big)=C(x_1\wedge x_2)(y_1\vee y_2)^{-\alpha}-C(x_1)C(x_2)(y_1 y_2)^{-\alpha}.
\]
For technical reasons, we shall consider the natural extension $\mathbb{W}(x,y)\equiv 0$ for $y \in [1/2,1]$.
 
\underline{Step 1:} Define
$$
\mathbb{F}_n^*(x,y):=\frac{1}{N_n}\underset{i=1}{\overset{n}{\sum}}\mathds{1}_{\{X_i^* \leq x\}}\mathds{1}_{\{Y_{i,n}^*/y_n >y\}}E_{i,n} \  x \in \mathbb{R}^d, \ y\geq 1.
$$
The following proposition states the asymptotic behavior of the empirical cdf based on a sample of the limit model $Q$.
\begin{proposition}\label{proposition step 1}
If (\ref{second order 1}) and (\ref{second order 2}) hold, then  
$$
\mathbb{G}_n^*:=\sqrt{np_n}(\mathbb{F}_n^*-\mathbb{F})\overset{\mathscr{L}}{\rightarrow} \mathbb{W}, \text{ in } L^{\infty,\beta}(\mathbb{R}^d\times [1/2,\infty)).
$$

\end{proposition}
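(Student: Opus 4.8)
The plan is to recognize $\mathbb{G}_n^*$ as (essentially) a classical empirical process indexed by a Donsker class, and then handle the two deviations from the textbook situation: the random denominator $N_n$ instead of $np_n$, and the fact that the relevant summands involve the Bernoulli masks $E_{i,n}$ rather than a clean i.i.d.\ sample of fixed size. First I would rewrite, for $y\ge 1$,
\[
\mathbb{F}_n^*(x,y)=\frac{np_n}{N_n}\cdot\frac{1}{np_n}\sum_{i=1}^n \mathds{1}_{\{X_{i,n}^*\le x\}}\mathds{1}_{\{Y_{i,n}^*/y_n>y\}}E_{i,n},
\]
so that
\[
\mathbb{G}_n^*(x,y)=\frac{np_n}{N_n}\,\widetilde{\mathbb{G}}_n^*(x,y)+\Big(\frac{np_n}{N_n}-1\Big)\sqrt{np_n}\,\mathbb{F}(x,y),
\]
where $\widetilde{\mathbb{G}}_n^*(x,y):=\frac{1}{\sqrt{np_n}}\sum_{i=1}^n\big(\mathds{1}_{\{X_{i,n}^*\le x,\,Y_{i,n}^*/y_n>y\}}E_{i,n}-p_n\mathbb{F}(x,y)\big)$. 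Note that by Proposition \ref{coupling propertie}(2)--(3), the vectors $(\mathds{1}\{E_{i,n}=1\}, X_{i,n}^*, Y_{i,n}^*/y_n)$ are i.i.d.\ with $\mathbb{P}(E_{i,n}=1)=p_n$ and, conditionally on $E_{i,n}=1$, $(X_{i,n}^*,Y_{i,n}^*/y_n)\sim Q$ with $Q=\sigma(x)\mathbf P_X\otimes\mathrm{Pareto}(\alpha)$; moreover $\mathbb{E}\big[\mathds{1}_{\{X_{i,n}^*\le x,\,Y_{i,n}^*/y_n>y\}}E_{i,n}\big]=p_n C(x)y^{-\alpha}=p_n\mathbb{F}(x,y)$, so $\widetilde{\mathbb{G}}_n^*$ is a genuine centered empirical process.

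Next I would establish weak convergence $\widetilde{\mathbb{G}}_n^*\overset{\mathscr L}{\to}\mathbb{W}$ in $L^{\infty,\beta}(\mathbb{R}^d\times[1/2,\infty))$. This is a triangular-array empirical process with envelope shrinking like $\sqrt{p_n}$, indexed by the class $\mathcal{F}=\{(e,u,v)\mapsto e\,\mathds{1}_{\{u\le x\}}\mathds{1}_{\{v>y\}}:x\in\mathbb{R}^d,y\ge 1\}$, which is a VC-type class (products of VC classes), hence uniformly Donsker; the weight $y^\beta$ with $0<\beta<\alpha$ is integrable against the Pareto tail, which is exactly what makes the envelope in the $\|\cdot\|_{\infty,\beta}$ norm square-integrable and controls the behaviour near $y=\infty$ — one checks $\mathbb{E}[Y_{i,n}^{*\,2\beta/\alpha}]<\infty$ since $2\beta/\alpha<2$, actually $<\alpha$ would suffice, and more to the point $\beta<\alpha$. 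Finite-dimensional convergence follows from the Lindeberg CLT (the Bernoulli factor $p_n\to 0$ is absorbed in the normalization $\sqrt{np_n}$, and $np_n\to\infty$ gives the Lindeberg condition), with the stated covariance obtained by a direct computation of $\mathbb{E}[\mathds{1}_{\{X^*\le x_1\wedge x_2\}}\mathds{1}_{\{V>y_1\vee y_2\}}E]/p_n$ minus the product of means. Asymptotic equicontinuity I would get from a maximal inequality / chaining argument for VC-type classes (Pollard or van der Vaart--Wellner style), uniformly over the triangular array; the tail direction $y\to\infty$ is handled separately using the $y^\beta$-weight and a truncation at $y=T$ with $T\to\infty$ slowly, controlling the contribution of $\{y>T\}$ by $T^{\beta-\alpha}\to 0$. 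This equicontinuity step, particularly making the chaining bound uniform in $n$ for the triangular array in the weighted sup-norm and the truncation at infinity, is where I expect the real work to lie; I may need to invoke or reprove a weighted-norm version of the Donsker theorem (the paper's appendix "technical lemma" is presumably tailored to this).

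Finally I would close the argument with the two correction terms: $np_n/N_n\to 1$ in probability since $N_n\sim\mathrm{Bin}(n,p_n)$ with $np_n\to\infty$, so by Slutsky the factor $np_n/N_n$ in front of $\widetilde{\mathbb{G}}_n^*$ is asymptotically negligible as a perturbation (using tightness of $\widetilde{\mathbb{G}}_n^*$ to control $(np_n/N_n-1)\widetilde{\mathbb{G}}_n^*=o_{\mathbb P}(1)$ in $L^{\infty,\beta}$), and the second term $(np_n/N_n-1)\sqrt{np_n}\,\mathbb{F}(x,y)$ has $L^{\infty,\beta}$-norm bounded by $|np_n/N_n-1|\cdot\sqrt{np_n}\cdot\sup_{x,y}y^\beta C(x)y^{-\alpha}=O_{\mathbb P}(1)\cdot|N_n/(np_n)-1|$, and since $\sqrt{np_n}(N_n/(np_n)-1)=O_{\mathbb P}(1)$ this term is in fact $O_{\mathbb P}(1/\sqrt{np_n})\cdot\sqrt{np_n}\cdot\ldots$ — wait, more carefully $|np_n/N_n-1|=O_{\mathbb P}((np_n)^{-1/2})$ so the term is $O_{\mathbb P}((np_n)^{-1/2})\cdot\sqrt{np_n}=O_{\mathbb P}(1)$, which does not vanish; hence this term must be kept and in fact it is precisely what makes $\mathbb{G}_n^*(x,1)$ a Brownian bridge rather than a Brownian motion. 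I would therefore treat the joint convergence of $\big(\widetilde{\mathbb{G}}_n^*,\ \sqrt{np_n}(N_n/(np_n)-1)\big)$ together — note $\sqrt{np_n}(N_n/(np_n)-1)=\widetilde{\mathbb{G}}_n^*(\infty,1)$ up to sign, so it is already part of the process — and then $\mathbb{G}_n^*(x,y)=\widetilde{\mathbb{G}}_n^*(x,y)-\mathbb{F}(x,y)\widetilde{\mathbb{G}}_n^*(\infty,1)+o_{\mathbb P}(1)$ in $L^{\infty,\beta}$, and the continuous map $f\mapsto f(\cdot,\cdot)-\mathbb{F}(\cdot,\cdot)f(\infty,1)$ applied to the Gaussian limit of $\widetilde{\mathbb{G}}_n^*$ yields exactly the centered Gaussian field $\mathbb{W}$ with the claimed covariance (the subtraction turning the "$-C(x_1)C(x_2)(y_1y_2)^{-\alpha}$" bridge term on). This identification of $\mathbb{W}$ via the bridge correction is the conceptual crux; the rest is bookkeeping.
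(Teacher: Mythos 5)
Your plan is correct in substance but follows a genuinely different route from the paper. You keep the Bernoulli masks $E_{i,n}$ in the sum, normalise by the deterministic $np_n$, prove a triangular-array uniform CLT for $\widetilde{\mathbb{G}}_n^*$ (whose limit has covariance $\mathbb{F}(x_1\wedge x_2,y_1\vee y_2)$, with no bridge term), and then recover the bridge through the correction $(np_n/N_n-1)\sqrt{np_n}\,\mathbb{F}=-\tfrac{np_n}{N_n}\widetilde{\mathbb{G}}_n^*(\infty,1)\mathbb{F}$ and the continuous map $f\mapsto f-\mathbb{F}\,f(\infty,1)$; that identification of $\mathbb{W}$ is correct. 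The paper instead invokes Lemma~\ref{dony_einmahl}: since $(X_{i,n}^*,Y_{i,n}^*)$ is independent of $(E_{i,n})$, the masked point process equals in law $\sum_{i=1}^{\nu(n)}\delta_{X_{i,n}^*,Y_{i,n}^*/y_n}$ with $\nu(n)\sim\mathcal{B}(n,p_n)$ independent of the sample, so $\mathbb{F}_n^*$ becomes the ordinary empirical distribution of an i.i.d.\ sample of random size $\nu(n)$ drawn from the fixed law $Q$; the statement then reduces, via $\nu(n)/np_n\to 1$, to the classical Donsker theorem for the single class $\mathcal{F}_\beta$ under $Q$ (VC-subgraph plus square-integrable envelope), and the bridge covariance comes out directly because one normalises by the actual sample size. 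What the paper's trick buys is precisely the step you flag as "where the real work lies": the asymptotic equicontinuity, uniform in $n$, of a sparse triangular-array process in the weighted sup-norm, which would require a local/sparse empirical process maximal inequality rather than an off-the-shelf Donsker theorem. Your route is viable but re-derives by hand what the combinatorial lemma gives for free. Two small corrections: the envelope of $\mathcal{F}_\beta$ is $F(u,v)=v^{\beta}$, so $Q$-square-integrability requires $\beta<\alpha/2$ (your aside that "$\beta<\alpha$ would suffice" is wrong, though harmless since the paper takes $\beta<\alpha\varepsilon/2<\alpha/4$); and $\sqrt{np_n}\bigl(N_n/(np_n)-1\bigr)$ equals $\widetilde{\mathbb{G}}_n^*(\infty,1)$ exactly, with no sign ambiguity.
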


\begin{proof} 
Since both $\mathbb{G}_n^*$ and $\mathcal{W}$ are a.s identically zero on $[1/2,1]$, it is sufficient to prove the weak convergence in $L^{\infty,\beta}(\mathbb{R}^d\times [1,\infty))$. Since $(X_{i,n}^*,Y_{i,n}^*)_{1 \leq i \leq n}$ is independent of $(E_{i,n})_{1 \leq i \leq n}$, Lemma \ref{dony_einmahl} entails the following equality in laws
\[
\sum_{i=1}^n\delta_{X_{i,n}^*,\frac{Y_{i,n}^*}{y_n}}E_{i,n}\overset{\mathscr{L}}{=}\sum_{i=1}^{\nu(n)}\delta_{X^*_{i,n},\frac{Y^*_{i,n}}{y_n}},
\]
where $\nu(n) \sim \mathcal{B}(n,p_n)$ is independent of $(X_{i,n}^*,Y_{i,n}^*)_{1 \leq i \leq n}$. \newline
Since $\mathscr{L}(X_{i,n}^*,Y_{i,n}^*/y_n)=\sigma(x)\mathbf{P}_{X}(dx)\otimes Pareto(\alpha)=Q$, and since $\nu(n) \overset{\mathbb{P}}{\rightarrow} \infty$, $\nu(n)/np_n \overset{\mathbb{P}}{\rightarrow} 1$ and $\nu(n)$ independent of $(X_{i,n}^*,Y_{i,n}^*)_{1\leq i \leq n}$, we see that $\mathbb{G}_n\overset{\mathscr{L}}{\rightarrow}\mathbb{W}$ will be a consequence of
\[
\sqrt{k}\left(\frac{1}{k}\sum_{i=1}^k\mathds{1}_{\{U_i\leq ., V_i >.\}}-\mathbb{F}(.,.)\right)\overset{\mathscr{L}}{\underset{k \rightarrow +\infty}{\longrightarrow}} \mathbb{W} \text{ in }\mathcal{L}^{\infty,\beta}\big(\mathbb{R}^d\times [1,\infty)\big).
\]
Consider the class of functions, 
\[
\mathcal{F}_{\beta}=\big\{ f_{x,y}:(u,v)\mapsto y^{\beta}\mathds{1}_{(-\infty,x]}(u)\mathds{1}_{[y,+\infty)}(v), x \in \mathbb{R}^d, y \geq 1 \big\}.
\]
Using the isometry:
\[
\begin{array}{rcl}
L^{\infty,\beta}(\mathbb{R}^d\times[1,\infty)) &\rightarrow&L^\infty(\mathcal{F}_{\beta})\\ 
g &\mapsto &\lbrace \Psi: f_{x,y} \mapsto g(x,y) \rbrace,
\end{array}
\]
it is enough to prove that the abstract empirical process indexed by $\mathcal{F}_{\beta}$ converges weakly to the $Q$- Brownian bridge indexed by $\mathcal{F}_{\beta}$. In other words, we need to verify that $\mathcal{F}_{\beta}$ is $Q$-Donsker.
This property can be deduced from two remarks: 

\begin{enumerate}
\item $\mathcal{F}_\beta$ is a VC-subgraph class of function (see, e.g, Van der Vaart and Wellner \cite{vdvjaune}). To see this, note that 
$$\mathcal{F}_\beta \subset \big\{ f_{x,s,z}:(u,v)\mapsto z \mathds{1}_{(-\infty,x]}(u)\mathds{1}_{[y,\infty)}(v), x \in \mathbb{R}^d, s \in [1;\infty), z \in\mathbb{R} \big\}$$
which is a VC-subgraph class: the subgraph of each of its members is an hypercube of $\mathbb{R}^{d+2}$.
\item $\mathcal{F}_\beta$ has a square integrable envelope $F$. This is proved by noting that for fixed $(u,v)\in \mathbb{R}^d\times[1,+\infty)$.
\[
F^2(u,v)=\underset{x \in \mathbb{R}^d ,\  y \geq 1}{\sup}\ y^{2\beta}\mathds{1}_{[0,x]}(u)\mathds{1}_{[y,\infty)}(v)=v^{2\beta}
\]
as a consequence $F^2$ is $Q$-integrable since $\beta <\alpha/2$. 
\end{enumerate}
This concludes the proof of Proposition \ref{proposition step 1}.
\end{proof}

\underline{Step 2:} We show here that the two empirical processes $\mathbb{G}_n$ and $\mathbb{G}^*_n$ must have the same weak limit, by proving the next proposition.

\begin{proposition}\label{prop:setp 2}
Under Assumptions (\ref{second order 1}) and (\ref{second order 2}), we have 

$$
\underset{x \in \mathbb{R}^d, \ y \geq 1 }{\sup}y^{\beta}\sqrt{np_n}|\mathbb{F}_n^*(x,y)-\mathbb{F}_n(x,y)|=o_\mathbb{P}(1).
$$

\end{proposition}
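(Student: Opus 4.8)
The plan is to compare the two pseudo-empirical distribution functions term by term, exploiting that the coupling of Proposition 2.3 makes $(X_i,Y_i)$ and $(X_{i,n}^*,Y_{i,n}^*)$ close on the event $E_{i,n}=1$. Write
\[
\mathbb{F}_n^*(x,y)-\mathbb{F}_n(x,y)=\frac{1}{N_n}\sum_{i=1}^n E_{i,n}\left(\mathds{1}_{\{X_{i,n}^*\leq x,\ Y_{i,n}^*/y_n>y\}}-\mathds{1}_{\{X_i\leq x,\ Y_i/y_n>y\}}\right),
\]
and split each summand, on $E_{i,n}=1$, according to whether the discrepancy comes from the covariate or from the response. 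On the covariate side the bound \eqref{X_X*_bound} controls $\mathbb{P}(\tilde X_{1,n}\neq X_{1,n}^*\mid E_{1,n}=1)\leq MA(1/p_n)$; on the response side \eqref{Y_Y*_bound} gives $|Y_{i,n}^*/\tilde Y_{i,n}-1|\leq MA(1/p_n)$ uniformly over $i$ with $E_{i,n}=1$, so that the indicators $\mathds{1}_{\{Y_i/y_n>y\}}$ and $\mathds{1}_{\{Y_{i,n}^*/y_n>y\}}$ can only differ when $Y_{i,n}^*/y_n$ lies in the shrinking interval $[y(1-\delta_n),y(1+\delta_n)]$ with $\delta_n:=MA(1/p_n)\to 0$.

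First I would reduce to controlling the ``$*$-side'' fluctuations: since $X_i=\tilde X_{i,n}$ and the two samples differ only through the coupling, the absolute value inside the supremum is at most
\[
\frac{1}{N_n}\sum_{i=1}^n E_{i,n}\mathds{1}_{\{X_i\neq X_{i,n}^*\}}
+\frac{1}{N_n}\sum_{i=1}^n E_{i,n}\mathds{1}_{\{y(1-\delta_n)<Y_{i,n}^*/y_n\leq y(1+\delta_n)\}}.
\]
Multiplying by $y^\beta\sqrt{np_n}$ and using $N_n/np_n\to 1$ in probability, it suffices to show that both
\[
\sqrt{np_n}\,\frac{1}{np_n}\sum_{i=1}^n E_{i,n}\mathds{1}_{\{X_i\neq X_{i,n}^*\}}
\quad\text{and}\quad
\sup_{y\geq 1}y^\beta\sqrt{np_n}\,\frac{1}{np_n}\sum_{i=1}^n E_{i,n}\mathds{1}_{\{y(1-\delta_n)<Y_{i,n}^*/y_n\leq y(1+\delta_n)\}}
\]
are $o_{\mathbb P}(1)$. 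The first is a sum of $n$ i.i.d.\ Bernoulli-type terms with mean $p_n\cdot\mathbb{P}(X_1\neq X_{1,n}^*\mid E_{1,n}=1)\leq p_nMA(1/p_n)$; its expectation after scaling is $O(\sqrt{np_n}A(1/p_n))=o(1)$ by the bias hypothesis (even with $\varepsilon=0$ here, since we only need $\sqrt{np_n}A(1/p_n)\to 0$, which follows from $\sqrt{np_n}^{1+\varepsilon}A(1/p_n)\to 0$ and $np_n\to\infty$), and a Markov/Chebyshev bound closes it.

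The main obstacle is the second term, the supremum over $y\geq 1$ of an oscillation-type quantity for the $*$-sample, because the interval width is random through $\delta_n$ and one needs uniformity in $y$ with the weight $y^\beta$. The plan is to handle it via the already-established weak convergence of $\mathbb{G}_n^*$ in $L^{\infty,\beta}(\mathbb{R}^d\times[1/2,\infty))$ from Proposition 3.5 (equivalently, asymptotic equicontinuity / the $Q$-Donsker property of $\mathcal F_\beta$): the scaled count over the band $[y(1-\delta_n),y(1+\delta_n)]$ equals $\mathbb{G}_n^*$ evaluated at two nearby points plus $\sqrt{np_n}$ times the deterministic difference $\mathbb{F}(\infty,y(1-\delta_n))-\mathbb{F}(\infty,y(1+\delta_n))=((1-\delta_n)^{-\alpha}-(1+\delta_n)^{-\alpha})y^{-\alpha}=O(\delta_n y^{-\alpha})$. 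The deterministic part, weighted by $y^\beta$ with $\beta<\alpha$, is $O(\sqrt{np_n}\,\delta_n)=O(\sqrt{np_n}A(1/p_n))=o(1)$; the stochastic part is $o_{\mathbb P}(1)$ uniformly in $y$ by asymptotic equicontinuity of $\mathbb{G}_n^*$ along the modulus $\delta_n\to 0$ (taking a little care to rescale the argument $y\mapsto y/y_n$ so that it stays in the index set, and using that $\mathbb{G}_n^*$ is genuinely tight in the weighted sup-norm). Assembling the two bounds and invoking $N_n/(np_n)\to 1$ gives the claimed $o_{\mathbb P}(1)$, which combined with Proposition 3.5 and the continuous mapping theorem for $\varphi$ completes the deterministic-threshold case of Theorem 1.1.
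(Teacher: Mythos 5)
Your overall strategy matches the paper's: insert the hybrid process with the true covariates and the starred responses, split the discrepancy into a covariate--mismatch term and a response--band term, and kill the band term by combining the deterministic Pareto increment with the asymptotic equicontinuity of $\mathbb{G}_n^*$ in the weighted norm. Your treatment of the band term is correct and essentially identical to the paper's (which makes the equicontinuity argument precise via the Portmanteau theorem and the H\"older continuity of the limiting Brownian bridge, the latter being where $\beta/\alpha<1/2$ enters).

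There is, however, a genuine gap in your treatment of the covariate--mismatch term. You bound the difference of the two empirical c.d.f.'s by the quantity $\frac{1}{N_n}\sum_i E_{i,n}\mathds{1}_{\{X_i\neq X_{i,n}^*\}}$, which no longer depends on $y$, and then multiply by $y^\beta\sqrt{np_n}$ and take the supremum over $y\geq 1$. Since $\beta>0$, $\sup_{y\geq 1}y^\beta c=+\infty$ for any constant $c>0$: the $y$-dependence cannot be discarded before taking the weighted supremum. The correct bound must retain the indicator $\mathds{1}_{\{Y_{i,n}^*/y_n>y\}}$ in this term; one then gets $\sup_{y\geq 1}y^\beta\max_i\mathds{1}_{\{Y_{i,n}^*/y_n>y\}}E_{i,n}=\max_i\big(Y_{i,n}^*/y_n\big)^\beta E_{i,n}$, which by Lemma \ref{dony_einmahl} is $O_{\mathbb{P}}\big((np_n)^{\beta/\alpha}\big)$, not $O_{\mathbb{P}}(1)$. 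The covariate term is therefore of order $(np_n)^{1/2+\beta/\alpha}A(1/p_n)$ in probability, and this is precisely where the strengthened bias condition $\sqrt{np_n}^{1+\varepsilon}A(1/p_n)\to 0$, together with the choice $\beta<\alpha\varepsilon/2$, is needed. Your side remark that $\varepsilon=0$ would suffice for this term is accordingly incorrect: the $\varepsilon>0$ is not slack in the hypothesis but the price paid for the unbounded weight $y^\beta$ in the supremum.
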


\begin{proof}
Adding and subtracting
$$
\mathbb{F}_n^\sharp(x,y):=\frac{1}{N_n}\sum_{i=1}^n \mathds{1}_{\{X_i \leq x\}}\mathds{1}_{\{Y^*_{i,n}/y_n >y\}}E_{i,n}
$$
in $|\mathbb{F}_n(x,y)-\mathbb{F}_n^*(x,y) |$, the triangle inequality entails, almost surely 
\begin{align*}
&|\mathbb{F}_n(x,y)-\mathbb{F}_n^*(x,y) | \\
=& |\mathbb{F}_n(x,y)-\mathbb{F}_n^\sharp(x,y)+\mathbb{F}_n^\sharp(x,y)-\mathbb{F}_n^*(x,y) | \\
                                    \leq& \frac{1}{N_n}\underset{i=1}{\overset{n}{\sum}}|\mathds{1}_{\{X_i \leq x\}}-\mathds{1}_{\{X^*_{i,n} \leq x\}}|\mathds{1}_{\left\lbrace\frac{Y^*_{i,n}}{y_n} >y\right\rbrace }E_{i,n} \\
                                    & +\frac{1}{N_n}\underset{i=1}{\overset{n}{\sum}}|\mathds{1}_{\left\lbrace\frac{Y_i}{y_n} >y \right\rbrace}-\mathds{1}_{\left\lbrace\frac{Y^*_{i,n}}{y_n} >y \right\rbrace}| \mathds{1}_{\{X_i \leq x\}}E_{i,n} \\           
                                  \leq& \frac{1}{N_n}\underset{i=1}{\overset{n}{\sum}}\mathds{1}_{\{X_i \neq X_{i,n}^*\}}\mathds{1}_{\left\lbrace\frac{Y^*_{i,n}}{y_n} >y \right\rbrace}E_{i,n}+\frac{1}{N_n}\underset{i=1}{\overset{n}{\sum}}|\mathds{1}_{\left\lbrace\frac{Y_i}{y_n} >y \right\rbrace}-\mathds{1}_{\left\lbrace\frac{Y^*_{i,n}}{y_n} >y\right\rbrace }|E_{i,n}. \\
\end{align*}
Let us first focus on the first term. 
First, notice that
$$
\begin{array}{rl}
\underset{x \in \mathbb{R}^d,\ y \geq 1}{\sup}&\frac{y^\beta\sqrt{np_n}}{N_n}\underset{i=1}{\overset{n}{\sum}}\mathds{1}_{\{X_i \neq X_{i,n}^*\}}\mathds{1}_{\left\lbrace\frac{Y^*_{i,n}}{y_n} >y \right\rbrace}E_{i,n} \\
				=&\underset{y \geq 1}{\sup}\frac{y^\beta\sqrt{np_n}}{N_n}\underset{i=1}{\overset{n}{\sum}}\mathds{1}_{\{X_i \neq X_{i,n}^*\}}\mathds{1}_{\left\lbrace\frac{Y^*_{i,n}}{y_n} >y \right\rbrace}E_{i,n} \\
				\leq &\underset{y \geq 1}{\sup}\frac{y^\beta\sqrt{np_n}}{N_n}\left(\underset{i=1,...,n}{\max}\mathds{1}_{\left\lbrace\frac{Y^*_{i,n}}{y_n} >y \right\rbrace}E_{i,n}\right) \underset{i=1}{\overset{n}{\sum}}\mathds{1}_{\{X_i \neq X_{i,n}^*\}}E_{i,n}.\\
\end{array}
$$
Now notice that
$$
\begin{array}{ccl}
\underset{y \geq 1}{\sup} \ \underset{i=1,...,n}{\max} \ y^\beta\mathds{1}_{\left\lbrace\frac{Y^*_{i,n}}{y_n} >y\right\rbrace }E_{i,n}&=&\underset{i=1,...,n}{\max} \ \underset{y \geq 1}{\sup} \ y^\beta\mathds{1}_{[1;Y_{i,n}^*/y_n]}(y)E_{i,n} \\
                        &=& \underset{i=1,...,n}{\max}\left(\frac{Y_{i,n}^*}{y_n}\right)^\beta E_{i,n}.
\end{array}
$$
By independence between $E_{i,n}$ and $Y_{i,n}^*/y_n$, Lemma \ref{dony_einmahl} gives $$ \underset{i=1,...,n}{\max}\left(\frac{Y_{i,n}^*}{y_n}\right)^\beta E_{i,n}\overset{\mathscr{L}}{=}\underset{i=1,...,\nu(n)}{\max}\left(\frac{Y_{i,n}^*}{y_n}\right)^\beta$$ 
where $Y_{i,n}^*/y_n$ in the right hand side have a Pareto($\alpha$) distribution, whence
\begin{equation}\label{truc intermediaire a}
\underset{i=1,...,n}{\max}\left(\frac{Y_{i,n}^*}{y_n}\right)^\beta E_{i,n}=O_{\mathbb{P}}(\nu(n)^{\beta/\alpha})= O_{\mathbb{P}}((np_n)^{\beta/\alpha}).
\end{equation}
Moreover, writing $A_n:=A(1/p_n)$ one has
$$
\mathbb{E}\left(\sum_{i=1}^n\mathds{1}_{\{X_i \neq X_{i,n}^*\}}E_{i,n}\right)= np_nA_n,
$$
which entails
\begin{equation}\label{truc intermediaire b}
\sum_{i=1}^n\mathds{1}_{\{X_i \neq X_{i,n}^*\}}E_{i,n}(np_nA_n)^{-1}=O_\mathbb{P}(1).
\end{equation}
As a consequence 
\begin{align*}
&\frac{\sqrt{np_n}}{N_n}\underset{i=1,...,n}{\max}\left(\frac{Y_{i,n}^*}{y_n}\right)^\beta E_{i,n}\left(\underset{i=1}{\overset{n}{\sum}}\mathds{1}_{\{X_i \neq X_{i,n}^*\}}E_{i,n}\right)\\
&=\frac{np_n}{N_n}\underset{i=1,...,n}{\max}\left(\frac{Y_{i,n}^*}{y_n}\right)^\beta E_{i,n}\frac{\underset{i=1}{\overset{n}{\sum}}\mathds{1}_{\{X_i \neq X_{i,n}^*\}}E_{i,n}}{np_nA_n}\sqrt{np_n}A_n\\
&=O_\mathbb{P}(1)O_\mathbb{P}((np_n)^{\beta/\alpha})O_\mathbb{P}(1)\sqrt{np_n}A_n,\text{ by } (\ref{truc intermediaire a}) \text{ and } (\ref{truc intermediaire b}) \\
&=o_{\mathbb{P}}(1), \text{ by assumption of Theorem \ref{EdHZ}},\text{ since } \frac{\beta}{\alpha} <\frac{\varepsilon}{2} .
\end{align*}
Let us now focus on the convergence
$$
\underset{x \in \mathbb{R}^d, \ y \geq 1 }{\sup}y^\beta\sqrt{np_n}\frac{1}{N_n}\underset{i=1}{\overset{n}{\sum}}\left|\mathds{1}_{\left\lbrace\frac{Y_i}{y_n} >y \right\rbrace}-\mathds{1}_{\left\lbrace\frac{Y^*_{i,n}}{y_n} >y \right\rbrace}\right|E_{i,n}\overset{\mathbb{P}}{\rightarrow}0.
$$
We deduce from Proposition \ref{coupling propertie} that, almost surely:

$$
(1-MA_n)\frac{Y_i}{y_n}E_{i,n} \leq \frac{Y_{i,n}^*}{y_n}E_{i,n} \leq (1+MA_n)\frac{Y_i}{y_n}E_{i,n}.
$$
Which entails, almost surely, for all $y \geq 1$:

$$
E_{i,n}\mathds{1}_{\left\lbrace\frac{Y_{i,n}^*}{y_n} \geq (1+MA_n)y \right\rbrace}\leq E_{i,n}\mathds{1}_{\left\lbrace\frac{Y_i}{y_n} \geq y \right\rbrace} \leq E_{i,n}\mathds{1}_{\left\lbrace\frac{Y_{i,n}^*}{y_n} \geq (1-MA_n)y \right\rbrace},
$$
implying
$$
\left|\mathds{1}_{\left\lbrace\frac{Y_i}{y_n} >y\right\rbrace}-\mathds{1}_{\left\lbrace\frac{Y_{i,n}^*}{y_n} > y\right\rbrace}\right|E_{i,n} \leq \left|\mathds{1}_{\left\lbrace\frac{Y_{i,n}^*}{y_n}>(1-MA_n)y \right\rbrace}-\mathds{1}_{\left\lbrace\frac{Y_{i,n}^*}{y_n}>(1+MA_n)y \right\rbrace}\right|E_{i,n}.
$$
This implies

$$
\begin{array}{rl}
\underset{x \in \mathbb{R}^d,\ y \geq 1 }{\sup}&y^\beta\sqrt{np_n}\frac{1}{N_n}\underset{i=1}{\overset{n}{\sum}}\left|\mathds{1}_{\left\lbrace\frac{Y_i}{y_n} >y \right\rbrace}-\mathds{1}_{\left\lbrace\frac{Y^*_{i,n}}{y_n} >y\right\rbrace }\right|E_{i,n} \\
\leq& \underset{x \in \mathbb{R}^d,\ y \geq 1 }{\sup}y^\beta\sqrt{np_n}\left|\mathbb{F}_n^*(+\infty,(1-MA_n)y)-\mathbb{F}_n^*(+\infty,(1+MA_n)y)\right|.
\end{array}
$$
Consequently we have, adding and substracting expectations:
\begin{equation}%\label{bound_emp_proc_step_2}
\begin{array}{rl}
\underset{x \in \mathbb{R}^d,\ y \geq 1 }{\sup}& y^\beta\sqrt{np_n}\frac{1}{N_n}\underset{i=1}{\overset{n}{\sum}}\left|\mathds{1}_{\{\frac{Y_i}{y_n} >y\} }-\mathds{1}_{\{\frac{Y^*_{i,n}}{y_n} >y\} }\right| E_{i,n} \\
                                       \leq   &\underset{x \in \mathbb{R}^d,\ y \geq 1 }{\sup}y^\beta\left|\tilde{\mathbb{G}}_n^*((1-MA_n)y)-\tilde{\mathbb{G}}_n^*((1+MA_n)y)\right| \\
                                              &+\sqrt{np_n}\underset{\ y \geq 1 }{\sup} \ y^\beta(\bar{F}_{Pareto(\alpha)}((1-MA_n)y)-\bar{F}_{Pareto(\alpha)}((1+MA_n)y)),
\end{array}
\end{equation}
where we wrote $\tilde{\mathbb{G}}_n^*(y)=\mathbb{G}_n^*(+\infty,y)$ and where $\bar{F}_{Pareto(\alpha)}(y)=y^{-\alpha}\mathds{1}_{[1,+\infty )}(y)$.
For, $y\geq 1$ we can bound
\[
y^\beta(\bar{F}_{Pareto(\alpha)}((1-MA_n)y)-\bar{F}_{Pareto(\alpha)}((1+MA_n)y))
\]
by the quantity 
\[
y^\beta |1-((1+MA_n)y)^{-\alpha}|\mathds{1}_{\{(1-MA_n)y<1\}}+y^\beta |((1-MA_n)y)^{-\alpha}-((1+MA_n)y)^{-\alpha}|\mathds{1}_{\{(1-MA_n)y\geq 1\}}.
\]
In the first term, since $(1-MA_n)y<1$, we can write 
\begin{align*}
&y^\beta |1-((1+A_n)y)^{-\alpha}|\mathds{1}_{\{(1-MA_n)y<1\}} \\
&\leq y^{\beta-\alpha} |y^\alpha-(1+A_n)^{-\alpha}|\mathds{1}_{\{(1-MA_n)y<1\}}\\
&\leq y^{\beta-\alpha} |(1-MA_n)^{-\alpha}-(1+A_n)^{-\alpha}|\mathds{1}_{\{(1-MA_n)y<1\}}\\
&\leq 4\alpha MA_n, \ \text{since }\beta-\alpha <0.
\end{align*}
Now, we can similarly study the second term, 
\begin{align*}
&y^\beta\left|\bar{F}_{Pareto(\alpha)}((1-MA_n)y)-\bar{F}_{Pareto(\alpha)}((1+MA_n)y)\right|\mathds{1}_{\{(1-MA_n)y\geq 1\}}\\
&=y^\beta(((1-MA_n)y)^{-\alpha}-((1+MA_n)y)^{-\alpha})\mathds{1}_{\{(1-MA_n)y\geq 1\}} \\
            &=y^{\beta-\alpha}((1-MA_n)^{-\alpha}-(1+MA_n)^{-\alpha})\mathds{1}_{\{(1-MA_n)y\geq 1\}}    \\
            &\leq y^{\beta-\alpha}4\alpha M A_n \\
            &\leq 4\alpha M A_n,\text{ since } \beta-\alpha<0 .
\end{align*}

This leads to 
$$
\begin{array}{rl}
\sqrt{np_n}&\underset{x \in \mathbb{R}^d,\ y \geq 1 }{\sup}y^\beta|\bar{F}_{Pareto(\alpha)}((1-MA_n)y)-\bar{F}_{Pareto(\alpha)}((1+MA_n)y)| \\
&\leq 8\alpha M \sqrt{np_n}A_n,
\end{array}
$$
which converges in probability to $0$ by assumptions of Theorem \ref{EdHZ}. \\
Now, by Proposition \ref{proposition step 1}, the continuous mapping theorem together with the Portmanteau theorem entail :
\begin{align*}
\forall \varepsilon >0, \forall \rho >0, \ \overline{\lim}&\ \mathbb{P}\left(\underset{y \geq 1, \delta < \rho}{\sup} \ y^\beta |\tilde{\mathbb{G}}_n^*((1-\delta)y)-\tilde{\mathbb{G}}_n^*((1+\delta)y)| \geq  \varepsilon\right) \\ \leq & \mathbb{P}\left(\underset{y \geq 1, \delta < \rho}{\sup} \ y^\beta |\tilde{\mathbb{W}}((1-\delta)y)-\tilde{\mathbb{W}}((1+\delta)y)| \geq \varepsilon\right). 
\end{align*}
Where $\tilde{\mathbb{W}}(y):=\mathbb{W}(+\infty,y)$ is the centered Gaussian process with covariance function
\[
cov(\tilde{\mathbb{W}}(y_1),\tilde{\mathbb{W}}(y_2)):=(y_1\vee y_2)^{-\alpha}-(y_1y_1)^{-\alpha}, \ (y_1,y_2) \in [1,+\infty)^2.
\]
The proof of Proposition \ref{prop:setp 2} will hence be concluded if we establish the following lemma:

\begin{lemma}We have 
\[
\underset{y \geq 1, \delta<\rho}{\sup} \ y^\beta |\tilde{\mathbb{W}}((1-\delta)y)-\tilde{\mathbb{W}}((1+\delta)y)|\underset{\rho \rightarrow 0}{\overset{\mathbb{P}}{\longrightarrow}}0.
\]
\end{lemma}

\begin{proof}
Let $\mathbb{B}_0$ be the standard Brownian bridge on $[0,2]$ (with $\mathbb{B}_0$ identically zero on $[1,2]$). $\tilde{\mathbb{W}}$ has the same law as $\{y \mapsto \mathbb{B}_0 (y^{-\alpha})\}$ (see \cite{SW09}, p. 99), from where

$$
\begin{array}{rl}
\underset{ y \geq 1 , \delta<\rho}{\sup} &y^\beta|\tilde{\mathbb{W}}((1-\delta)y)-\tilde{\mathbb{W}}((1+\delta)y)|\\
\overset{\mathscr{L}}{=} & \underset{ y \geq 1, \delta<\rho }{\sup} \ y^\beta \left|\mathbb{B}_0(((1-\delta)y)^{-\alpha})-\mathbb{B}_0(((1+\delta)y)^{-\alpha})\right| \\
\leq & \underset{ 0 \leq y \leq 1 , \delta<\rho}{\sup} \ y^{-\beta/\alpha}\left|\mathbb{B}_0((1-\delta)^{-\alpha}y)-\mathbb{B}_0((1+\delta)^{-\alpha}y)\right|, \ \text{almost surely}. \\

\end{array}
$$ 
Since $\beta/\alpha < 1/2$ the process $\mathbb{B}_0$ is a.s-$\beta/\alpha$-Hölder continuous. Consequently, for an a.s finite random variable $H$ one has
$$
\begin{array}{rl}
\underset{ 0\leq y \leq 1, \delta < \rho }{\sup} &y^{-\beta/\alpha}|\mathbb{B}_0((1-\delta)^{-\alpha}y)-\mathbb{B}_0((1+\delta)^{-\alpha}y)|\\
\leq&y^{-\beta/\alpha}|(1-\rho)^{-\alpha}-(1+\rho)^{-\alpha}|^{\beta/\alpha}y^{\beta/\alpha} H \\
=& |(1-\rho)^{-\alpha}-(1+\rho)^{-\alpha}|^{\beta/\alpha} H \\
=& (4\alpha \rho)^{\beta/\alpha}H.
\end{array}
$$
\end{proof}
This lemma concludes the proof of Theorem \ref{EdHZ} when $\textbf{y}_n\equiv y_n$.
\end{proof}

\subsubsection{Proof of Theorem \ref{EdHZ} in the general case.}
We now drop the assumption $\textbf{y}_n\equiv y_n$ and we relax it to $\frac{\mathbf{y}_n}{y_n}\overset{\mathbb{P}}{\rightarrow}1$ to achieve the proof of Theorem \ref{EdHZ} in its full generality. We shall use the results of \S \ref{prof for y_n det}.

Write
$$
\overset{\vee}{\mathbb{F}}_n(x,y):=\frac{1}{N_n}\underset{i=1}{\overset{n}{\sum}}\mathds{1}_{\{X_i \leq x\}}\mathds{1}_{\{Y_i/y_n >y\}} \ \text{and}
$$

\[
\overset{\vee}{\mathbb{G}}_n(x,y):=\sqrt{np_n}\left(\overset{\vee}{\mathbb{F}}_n(x,y)-\mathbb{F}(x,y)\right).
\]
Consider the followings maps $(g_n)_{n \in \mathbb{N}}$ and $g$ from $L^{\infty,\beta}\big(\mathbb{R}^d \times [1/2,\infty)\big) \times [1,\infty)$ to $L^{\infty,\beta}(\mathbb{R}^d\times[1,\infty))$ 
\[
g_n:(\varphi, u)\mapsto \sqrt{np_n}\left(\frac{\mathbb{F}(.,u.)+\frac{1}{\sqrt{np_n}}\varphi(.,u.)}{\mathbb{F}(+\infty,u)+\frac{1}{\sqrt{np_n}}\varphi(+\infty,u)}-\mathbb{F}(.,.)\right), \ \text{and}
\]
\[
g:(\varphi, u)\mapsto u^\alpha\Big(\varphi(.,u.)-\varphi(+\infty,u)\mathbb{F}(.,.)\Big). 
\]
Now write $u_n:=\frac{\mathbf{y}_n}{y_n}$. From \S \ref{prof for y_n det}, we know that $\left(\overset{\vee}{\mathbb{G}}_n, u_n\right) \overset{\mathscr{L}}{\rightarrow}\left(\mathbb{W},1\right)$ in $L^{\infty,\beta}(\mathbb{R}^d\times[1/2,\infty))$. Also note that $\mathbb{W}$ almost surely belongs to
$$\mathbb{D}_0=\left\lbrace\varphi \in L^{\infty,\beta}(\mathbb{R}^d\times[1,+\infty)), \underset{x \in \mathbb{R}^d, y,y'>1/2}{\sup}\frac{|\varphi(x,y)-\varphi(x,y')|}{|y-y'|^{\beta/\alpha}}<+\infty\right\rbrace .$$
Notice that $\mathbb{G}_n=g_n(\overset{\vee}{\mathbb{G}}_n,u_n)$ and $g(\mathbb{W},1)=\mathbb{W}$.

Using the extended continuous mapping theorem, (see, e.g Theorem 1.11.1 p. 67 in \cite{vdvjaune}) it is sufficient to check that, for any sequence $\varphi_n$ of elements of $L^{\infty,\beta}(\mathbb{R}^d\times[1/2,+\infty))$ that converges to some $\varphi \in \mathbb{D}_0$, and for any sequence $u_n\rightarrow 1$ one has $g_n(\varphi_n,u_n)\rightarrow g(\varphi,1)$ in $L^{\infty,\beta}(\mathbb{R}^d\times[1,+\infty))$. Here, convergence in $L^{\infty,\beta}(\mathbb{R}^d\times[1/2,+\infty))$ is understood as with respect to the natural extension of $\|.\|_{\infty, \beta}$ on $\mathbb{R}^d\times[1/2,+\infty)$.\\ 
For fixed $(x,y) \in \mathbb{R}^d\times [1/2,+\infty)$ and $n \geq 1$, we have
\begin{align*}
&|g_n(\varphi_n,u_n)(x,y)-g(\varphi,1)(x,y)|\\
&=\left| \sqrt{np_n}\left(\frac{\mathbb{F}(x,u_ny)+\frac{1}{\sqrt{np_n}}\varphi_n(x,u_ny)}{\mathbb{F}(+\infty,u_n)+\frac{1}{\sqrt{np_n}}\varphi_n(+\infty,u_n)}-\mathbb{F}(x,y)\right)-\Big(\varphi(x,y)-\varphi(+\infty,1)\mathbb{F}(x,y)\Big)\right|.\\
\end{align*} 
Seeing that $\mathbb{F}(x,y)/\mathbb{F}(+\infty,1)=\mathbb{F}(x,y)$ we deduce that 
\begin{align*}
&\frac{\mathbb{F}(x,u_ny)+\frac{1}{\sqrt{np_n}}\varphi_n(x,u_ny)}{\mathbb{F}(+\infty,u_n)+\frac{1}{\sqrt{np_n}}\varphi_n(+\infty,u_n)}-\mathbb{F}(x,y) \\
&=\mathbb{F}(x,y)\left(\frac{1+\frac{1}{\sqrt{np_n}}\frac{\varphi_n(x,u_ny)}{\mathbb{F}(x,u_ny)}}{1+\frac{1}{\sqrt{np_n}}\frac{\varphi_n(+\infty,u_n)}{\mathbb{F}(+\infty,u_n)}}-1\right) \\
&=\mathbb{F}(x,y)\left(\left(1+\frac{1}{\sqrt{np_n}}\frac{\varphi_n(x,u_ny)}{\mathbb{F}(x,u_ny)}\right)\left(1-\frac{u_n^\alpha}{\sqrt{np_n}}\varphi_n(+\infty,u_n)+\frac{u_n^\alpha}{\sqrt{np_n}}\varphi_n(+\infty,u_n)\epsilon_n\right)-1\right) \\
&=\mathbb{F}(x,y)\left(\frac{1}{\sqrt{np_n}}\left(\frac{\varphi_n(x,u_ny)}{\mathbb{F}(x,u_ny)}-u_n^\alpha\varphi_n(+\infty,u_n)\right)+R_n(x,y)\right), \\
\end{align*}
with $\epsilon_n\rightarrow 0$ a sequence of real numbers, not depending upon $x$ and $y$, and with
\[
R_n(x,y):=\frac{u_n^\alpha}{\sqrt{np_n}}\varphi_n(+\infty,u_n)\epsilon_n+\frac{u_n^{2\alpha}}{np_n}\varphi_n(+\infty,u_n)\frac{\varphi_n(x,u_ny)}{\mathbb{F}(x,y)}+\frac{u_n^{2\alpha}}{np_n}\varphi_n(+\infty,u_n)\epsilon_n\frac{\varphi_n(x,u_ny)}{\mathbb{F}(x,y)}.
\]
This implies that 
\[
\| g_n(\varphi_n,u_n)-g(\varphi,1)\|_{\infty,\beta} \leq B_{1,n}+B_{2,n}+B_{3,n}+B_{4,n},
\]
where the bounds, $B_{1,n},...,B_{4,n}$ are detailed below and will be proved to converge to zero as $n\rightarrow \infty$. 
\begin{itemize}
	\item $B_{1,n}:=\|u_n^\alpha\varphi_n(.,u_n.)-\varphi(.,.) \|_{\infty,\beta}$ is controlled as follows
	\begin{align*}
	\|& u_n^\alpha\varphi_n(.,u_n.)-\varphi(.,.) \|_{\infty,\beta} \\
	 \leq& \|u_n^\alpha\varphi_n(.,u_n.)-\varphi_n(.,u_n.)\|_{\infty,\beta} + \|\varphi_n(.,u_n.)-\varphi(.,.) \|_{\infty,\beta} \\
	=&|u_n^\alpha-1|\|\varphi_n(.,u_n.)\|_{\infty,\beta}+\|\varphi_n(.,u_n.)-\varphi(.,.) \|_{\infty,\beta} \\
	 \leq & |u_n^\alpha-1|\|\varphi_n(.,u_n.)\|_{\infty,\beta}+\|\varphi_n(.,u_n.)-\varphi(.,u_n.) \|_{\infty,\beta}\\
	&+\|\varphi(.,u_n.)-\varphi(.,.) \|_{\infty,\beta} \\
	 \leq & |u_n^\alpha-1|\|\varphi_n(.,u_n.)\|_{\infty,\beta}+u_n^{-\beta}\underset{x \in \mathbb{R}^d, \ y \geq 1/2}{\sup} \ y^\beta|\varphi_n(x,y)-\varphi(x,y)|\\
	&+H_\varphi |u_n-u|^{\beta/\alpha},
	\end{align*}
	where $H_\varphi:=\sup\{|y-y'|^{-\beta/\alpha}|\varphi(x,y)-\varphi(x,y')|,x \in \mathbb{R}^d, y,y'\geq 1/2 \}$ is finite since $\varphi \in \mathbb{D}_0$. \\
	The first term converges to 0 since $u_n \rightarrow 1$ and $\varphi_n$ belongs $L^{\infty,\beta}(\mathbb{R}^d\times[1/2,+\infty))$. So does the second because $\varphi_n \rightarrow\varphi$, in $L^{\infty,\beta}(\mathbb{R}^d\times[1/2,+\infty))$. And so does the third because $H_\varphi $ is finite. 
	\item $B_{2,n}:=\| (u_n^\alpha\varphi_n(+\infty,u_n)-\varphi(+\infty,1))\mathbb{F}\|_{\infty,\beta}$. Remark that, 
	\begin{align*}
	&\| (u_n^\alpha\varphi_n(+\infty,u_n)-\varphi(+\infty,1))\mathbb{F}\|_{\infty,\beta} \\
	&\leq \big(|u_n^\alpha\varphi_n(+\infty,u_n)-\varphi_n(+\infty,u_n)|+|\varphi_n(+\infty,u_n)-\varphi(+\infty,1)|\big)\|\mathbb{F}\|_{\infty,\beta}.
	\end{align*}
	Since $\beta < \alpha \varepsilon <\alpha/2$, $\|\mathbb{F}\|_{\infty,\beta}$ is finite, from where $B_{2,n}\rightarrow 0$.
	\item $B_{3,n}:=\|u_n^\alpha\varphi_n(+\infty,u_n)\epsilon_n\mathbb{F} \|_{\infty,\beta} \leq |u_n^\alpha\varphi_n(+\infty,u_n)| \times |\epsilon_n| \times \|\mathbb{F} \|_{\infty,\beta}$. Since $\|\mathbb{F}\|_{\infty,\beta}$ is finite, since $|u_n^\alpha\varphi_n(+\infty,u_n)|$ is a converging sequence, and since $|\epsilon_n|\rightarrow 0$, we conclude that $B_{3,n} \rightarrow 0$.  
	\item $B_{4,n}:=\big(1+|\epsilon_n|\big)\Big\|\frac{u_n^{2\alpha}}{\sqrt{np_n}}\varphi_n(+\infty,u_n)\varphi_n(.,u_n.) \Big\|_{\infty,\beta}\leq \big(1+|\epsilon_n|\big)\Big|\frac{u_n^{2\alpha}}{\sqrt{np_n}}\varphi_n(+\infty,u_n)\Big| \times \|\varphi_n(.,u_n.) \|_{\infty,\beta}$. Since $\varphi_n \in L^{\infty,\beta}(\mathbb{R}^d\times[1/2,+\infty))$, the same arguments as for $B_{3,n}$ entail the convergence to zero of $B_{4,n}$.
\end{itemize}

\subsection{Proof of theorem \ref{thm:normality-skedasis}}

Let $x$ be a fixed point in $\mathbb{R}^d$. We keep it fixed in all this section.
To prove the asymptotic normality of $\hat{\sigma}_n(x)$ we first establish the asymptotic normality of the numerator and the denominator separately. Note that we don't need to study their joint asymptotic normality, because only the numerator will rule the asymptotic normality of $\hat{\sigma}_n(x)$, as its rate of convergence is the slowest.

\begin{proposition}
Assume that $(p_n)_{n \geq 1}$ and $(h_n)_{n \geq 1}$ both converge to 0 and satisfy $np_nh_n^d \rightarrow 0$. We have 

\begin{equation}\label{normalite_numerateur}
\frac{1}{\sqrt{np_nh_n^d}}\sum_{i=1}^n\frac{\mathds{1}_{\{|X_i-x|\leq h_n,Y_i >\mathbf{y}_n\}}-\mathbb{P}(|X_i-x|\leq h_n,Y_i >\mathbf{y}_n)}{\sqrt{\sigma(x)f(x)}}\overset{\mathscr{L}}{\rightarrow} \mathcal{N}(0,1),
\end{equation}
\begin{equation}\label{normalite_denominateur}
\frac{1}{\sqrt{nh_n^d}}\sum_{i=1}^n\frac{\mathds{1}_{\{|X_i-x|\leq h_n\}}-\mathbb{P}(|X_i-x|\leq h_n)}{\sqrt{f(x)}}\overset{\mathscr{L}}{\rightarrow} \mathcal{N}(0,1)
\end{equation}
and
\begin{equation}\label{normalite_proportion}
\frac{1}{\sqrt{np_n}}\sum_{i=1}^n(\mathds{1}_{\{Y_i > \mathbf{y}_n\}}-p_n) \overset{\mathscr{L}}{\rightarrow} \mathcal{N}(0,1).
\end{equation}
\end{proposition}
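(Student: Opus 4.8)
The plan is to observe that all three displays \eqref{normalite_numerateur}--\eqref{normalite_proportion} are central limit theorems for row-wise i.i.d.\ triangular arrays of centered \emph{bounded} summands, and to dispatch each with the Lyapunov CLT (third-moment version). In every case I would write the sum as $S_n=\sum_{i=1}^n(\xi_{i,n}-\mathbb{E}\xi_{i,n})$ where $\xi_{i,n}$ is the indicator of an event of probability $\pi_n\to 0$, with a deterministic normalizer $s_n$; the two things to check are $\mathrm{Var}(S_n)/s_n^2\to 1$ and the Lyapunov ratio $s_n^{-3}\sum_{i=1}^n\mathbb{E}|\xi_{i,n}-\mathbb{E}\xi_{i,n}|^3\to 0$. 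Since $\xi_{i,n}$ is Bernoulli, $\mathrm{Var}(S_n)=n\pi_n(1-\pi_n)$ and $\mathbb{E}|\xi_{i,n}-\pi_n|^3=\pi_n(1-\pi_n)(\pi_n^2+(1-\pi_n)^2)\le\pi_n$, so the Lyapunov ratio is at most of order $n\pi_n/s_n^3$ and everything reduces to pinning down the exact order of $\pi_n$.

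I would start with \eqref{normalite_proportion}, the classical binomial CLT: here $\pi_n=p_n$ and $s_n^2=np_n$, so $\mathrm{Var}(S_n)/s_n^2=1-p_n\to1$ and the Lyapunov ratio is $O\big(np_n/(np_n)^{3/2}\big)=O\big((np_n)^{-1/2}\big)\to0$ since $np_n\to\infty$. Next, for \eqref{normalite_denominateur} take $\xi_{i,n}=\mathds{1}_{\{|X_i-x|\le h_n\}}$, so that $\pi_n=\mathbb{P}(|X-x|\le h_n)=\int_{\{|u-x|\le h_n\}}f(u)\,\mathrm{d}u$; by continuity and positivity of $f$ at $x$ this is $\sim f(x)h_n^d$ (the volume constant of the unit ball being absorbed into the normalization, per the paper's convention). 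Hence $s_n^2=nh_n^df(x)$ matches $\mathrm{Var}(S_n)$ to leading order, and since $nh_n^d\ge np_nh_n^d\to\infty$ the Lyapunov ratio is $O\big((nh_n^d)^{-1/2}\big)\to0$.

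The only display that uses the model hypothesis is \eqref{normalite_numerateur}. Here $\xi_{i,n}=\mathds{1}_{\{|X_i-x|\le h_n,\,Y_i>\mathbf{y}_n\}}$, so $\pi_n=\int_{\{|u-x|\le h_n\}}\bar F_u(\mathbf{y}_n)f(u)\,\mathrm{d}u$. I would invoke the second-order condition \eqref{second order 1} to write $\bar F_u(\mathbf{y}_n)=\sigma(u)\bar F(\mathbf{y}_n)\big(1+O(A(1/p_n))\big)$ \emph{uniformly} in $u$, factor $\bar F(\mathbf{y}_n)=p_n$ out of the integral, and then use the continuity of $\sigma$ and $f$ at $x$ (in force throughout this section) to obtain $\int_{\{|u-x|\le h_n\}}\sigma(u)f(u)\,\mathrm{d}u\sim\sigma(x)f(x)h_n^d$, whence $\pi_n\sim\sigma(x)f(x)h_n^dp_n$. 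This makes $s_n^2=np_nh_n^d\sigma(x)f(x)$ the correct normalization, and the Lyapunov ratio is $O\big(n\pi_n/(np_nh_n^d)^{3/2}\big)=O\big((np_nh_n^d)^{-1/2}\big)\to0$ under the divergence $np_nh_n^d\to\infty$ (the intended hypothesis of the proposition).

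Since the asymptotics of $\hat\sigma_n(x)$ are governed by the numerator, only the consistency of this variance normalization matters here, not the bias term, so the bias-rate condition $\sqrt{np_nh_n^d}A(1/p_n)\to0$ plays no role in this proposition. The only mildly delicate point is making the replacement coming from \eqref{second order 1} uniform in $u$ over the shrinking ball $\{|u-x|\le h_n\}$ before invoking continuity of $\sigma$ and $f$ at $x$; everything else is the textbook Lyapunov computation.
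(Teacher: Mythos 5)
Your proof is correct and follows essentially the same route as the paper: both arguments reduce each display to a triangular-array CLT for centered Bernoulli indicators, the only substantive step being the identification $\pi_n\sim\sigma(x)f(x)p_nh_n^d$ via the uniformity in \eqref{second order 1} followed by the continuity of $f$ and $\sigma$ at $x$. The sole (cosmetic) difference is that you verify the Lyapunov third-moment condition where the paper checks the Lindeberg condition directly (trivially, since the normalized summands are uniformly $o(1)$ once $np_nh_n^d\to+\infty$); you also correctly read the stated hypothesis $np_nh_n^d\to 0$ as a typo for $np_nh_n^d\to+\infty$, which is what the paper's own proof uses.
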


\begin{proof}
Note that (\ref{normalite_proportion}) is the central limit theorem for binomial$(n,p_n)$ sequences with $p_n\rightarrow 0$ and $np_n\rightarrow \infty$.
 The proof of the two other convergences relies on Lindeberg-Levy Theorem (see e.g \cite{billingsley1} Theorem 27.2 p. 359). First, denote 
\[
Z_{i,n}:=\frac{\mathds{1}_{\{|X_i-x|\leq h_n,Y_i >\textbf{y}_n\}}-\mathbb{P}(|X_i-x|\leq h_n,Y_i >\textbf{y}_n)}{\sqrt{np_nh_n^d}\sqrt{\sigma(x)f(x)}}
\]
and remark that $\mathbb{E}\left(Z_{i,n}\right)=0.$
Moreover we can write, 
\[
\begin{array}{ccl}
\mathbb{E}\left(\mathds{1}_{\{|X_i-x|\leq h_n,Y_i >\textbf{y}_n\}}\right)&=&\int_{B(x,h)}\mathbb{P}(Y_i> \mathbf{y}_n|X_i=z)P_X(dz) \\
                      &\approx&\int_{B(x,h)}\sigma(z)p_nP_X(dz) \quad (a)\\
                      &\approx&\sigma(x)f(x)p_nh_n^d, \quad (b)
\end{array}
\]

where $(a)$ is a consequence of the uniformity in assumption \eqref{second order 1}, while equivalence $(b)$ holds by our assumptions upon the regularity of both $f$ and $\sigma$ in Theorem \ref{thm:normality-skedasis}. We deduce that $\sup\{|n\text{Var}\left(Z_{i,n}\right)-1|,\; i=1,...,n\}\rightarrow 0.$
Note that the result is proved, if for all $\varepsilon >0$, we have 
\[
\sum_{i=1}^n\int_{\{Z_{i,n}>\varepsilon\}}Z_{i,n}^2 P_X(dx)\rightarrow 0.
\]
This convergence holds since the set $\{Z_{i,n}>\varepsilon\}$ can be written  
\[
\{|\mathds{1}_{\{|X_i-x|\leq h_n,Y_i >\textbf{y}_n\}}-\mathbb{P}(|X_i-x|\leq h_n,Y_i >\textbf{y}_n)| \geq\varepsilon \sqrt{\sigma(x)f(x)}\sqrt{np_nh_n^d}\}
\]
which is empty when $n$ is large enough, since $np_nh_n^d\rightarrow +\infty$. We can now apply the Lindeberg-Levy Theorem to show \eqref{normalite_numerateur}. The convergences \eqref{normalite_denominateur} is proved in a similar manner way.
\end{proof}
Now, writing 
\[
\hat\sigma_n(x)=\frac{n}{\sum_{i=1}^n\mathds{1}_{\{Y_i> \mathbf{y}_n\}}}\times\frac{\sum_{i=1}^n\mathds{1}_{\{|x-X_i|< h_n\}}\mathds{1}_{\{Y_i> \mathbf{y}_n\}}}{\sum_{i=1}^n\mathds{1}_{\{|x-X_i|< h_n\}}},
\]
the following of the proof is to combine the three asymptotic normalities. We have 
\[
\hat{\sigma}_n(x)=\frac{1}{1+\frac{1}{\sqrt{np_n}}\underset{i=1}{\overset{n}{\sum}}Z_{i,n}^\sharp}\times\frac{\frac{\mathbb{P}(|X-x|\leq h_n,Y >\textbf{y}_n)}{p_nh_n^d}+\sqrt{\frac{f(x)\sigma(x)}{np_nh_n^d}}\underset{i=1}{\overset{n}{\sum}} Z_{i,n}}{\frac{\mathbb{P}(|X-x|\leq h_n)}{h_n^d}+\sqrt{\frac{f(x)}{nh_n^d}}\underset{i=1}{\overset{n}{\sum}} \tilde{Z}_{i,n}}
\] 
where 
\[
\tilde{Z}_{i,n}:=\frac{\mathds{1}_{\{|X_i-x|\leq h_n\}}-\mathbb{P}(|X_i-x|\leq h_n)}{\sqrt{f(x)}\sqrt{nh_n^d}}
\]
and
\[
Z_{i,n}^\sharp:=\frac{\mathds{1}_{\{Y_i > \textbf{y}_n\}}-p_n}{\sqrt{np_n}}.
\]
Now write 
\[
\sigma_{h_n}(x):=\frac{\mathbb{P}(|X-x|\leq h_n,Y >\textbf{y}_n)}{p_nh_n^df(x)}.
\]
Since $f$ is continuous and bounder away from zero on a neighbourhood of $x$ we have
\[
\hat{\sigma}_n(x)=\frac{1}{1+\frac{1}{\sqrt{np_n}}\underset{i=1}{\overset{n}{\sum}}Z_{i,n}^\sharp}\frac{\sigma_{h_n}(x)f(x)(1+\varepsilon_{n,1})+\sqrt{\frac{f(x)\sigma(x)}{np_nh_n^d}}\underset{i=1}{\overset{n}{\sum}} Z_{i,n}}{f(x)(1+\varepsilon_{n,2})+\sqrt{\frac{f(x)}{nh_n^d}}\underset{i=1}{\overset{n}{\sum}} \tilde{Z}_{i,n}},
\]
with |$\varepsilon_{n,1}|\vee|\varepsilon_{n,2}|\rightarrow 0$. Now, factorizing by $f(x)$, the Taylor expansion of the denominator gives 
\begin{align*}
\hat{\sigma}_n(x)=\frac{1}{1+\frac{1}{\sqrt{np_n}}\underset{i=1}{\overset{n}{\sum}}Z_{i,n}^\sharp}\left(\sigma_{h_n}(x)+\sqrt{\frac{\sigma(x)}{np_nh_n^df(x)}}\underset{i=1}{\overset{n}{\sum}} Z_{i,n}\right)\\ \times \left(1-\sqrt{\frac{1}{nh_n^df(x)}}\underset{i=1}{\overset{n}{\sum}} \tilde{Z}_{i,n}+o_{\mathbb{P}}\left(\sqrt{\frac{1}{nh_n^df(x)}}\right)\right).
\end{align*} 

Since $f(x)$ bounded away from zero on a neighborhood of $x$, and remarking that $(nh_n^d)^{-1}=o\Big((np_nh_n^d)^{-1}\Big)$, we have, by \eqref{normalite_numerateur} and \eqref{normalite_denominateur}, 
\[
\hat{\sigma}_n(x)=\frac{1}{1+\frac{1}{\sqrt{np_n}}\underset{i=1}{\overset{n}{\sum}}Z_{i,n}^\sharp}\left(\sigma_{h_n}(x)+\sqrt{\frac{\sigma(x)}{np_nh_n^df(x)}}\underset{i=1}{\overset{n}{\sum}} Z_{i,n}+o_{\mathbb{P}}\left(\frac{1}{\sqrt{np_nh_n^d}}\right)\right).
\] 
Moreover, with one more Taylor expansion of the denominator, we have, by \eqref{normalite_proportion},
\[
\hat{\sigma}_n(x)=\sigma_{h_n}(x)+\sqrt{\frac{\sigma(x)}{np_nh_n^df(x)}}\underset{i=1}{\overset{n}{\sum}} Z_{i,n}+o_{\mathbb{P}}\left(\frac{1}{\sqrt{np_nh_n^d}}\right),
\] 
which entails 
\[
\sqrt{np_nh_n^d}(\hat{\sigma}_n(x)-\sigma_{h_n}(x))=\sqrt{\frac{\sigma(x)}{f(x)}}\underset{i=1}{\overset{n}{\sum}} Z_{i,n}+o_{\mathbb{P}}(1).
\]
The asymptotic normality of $\underset{i=1}{\overset{n}{\sum}} Z_{i,n}$ gives 
\[
\sqrt{np_nh_n^d}(\hat{\sigma}_n(x)-\sigma_{h_n}(x))\overset{\mathscr{L}}{\rightarrow} \mathcal{N}\left(0,\frac{\sigma(x)}{f(x)}\right).
\]
The proof is achieved by noticing that assumption \eqref{second order 1} entails
\begin{align*}
\sqrt{np_nh_n^d}|\sigma_{h_n}(x)-\sigma(x)|&=\sqrt{np_nh_n^d}\left|\frac{\mathbb{P}(|X-x|\leq h_n,Y > \textbf{y}_n)}{f(x)h_n^d\mathbb{P}(Y>\textbf{y}_n)}-\sigma(x)\right| \\
&=\sqrt{np_nh_n^d}\left|\frac{\mathbb{P}(Y > \textbf{y}_n|X\in B(x,h_n))}{\mathbb{P}(Y>\textbf{y}_n)}-\sigma(x)\right| \\
&=O\left(\sqrt{np_nh_n^d}A\left(\frac{1}{p_n}\right)\right)\rightarrow 0.
\end{align*}

\subsection{Proof of theorem \ref{thm:quantile-normality}}
For sake of clarity, we first express conditions \eqref{second order 1} and \eqref{second order 2} in terms of the tail quantile function $U$: we have, uniformly in $x$,
\[
\left|\frac{U_x(1/\alpha_n)}{U(\sigma(x)/\alpha_n)}-1\right|=O(A_n) \text{ and }\left|\frac{U(1/\alpha_n)}{xU(x^{-\alpha}/\alpha_n)}-1\right|=O(A_n).
\]

Start the proof by splitting the quantity of interest into four parts,  
\begin{align*}
\log\left(\frac{\hat{q}(\alpha_n|x)}{q(\alpha_n|x)}\right)=&\log\left(\frac{\textbf{y}_n}{q(\alpha_n|x)}\left(\frac{\hat{p}_n\hat{\sigma}_n(x)}{\alpha_n}\right)^{\hat{\gamma}_n}\right) \\
=&\log\left(\frac{\textbf{y}_n}{q(\alpha_n|x)}\left(\frac{p_n\hat{\sigma}_n(x)}{\alpha_n}\right)^{\hat{\gamma}_n}\left(\frac{\hat{p}_n}{p_n}\right)^{\hat{\gamma}_n}\right)\\
=&\log\left(\frac{\textbf{y}_n}{q(\alpha_n|x)}\right)+\hat{\gamma}_n\log\left(\frac{p_n}{\alpha_n}\right)+\hat{\gamma}_n\log(\hat{\sigma}_n(x))+\hat{\gamma}_n\log\left(\frac{\hat{p}_n}{p_n}\right) \\
=&\log\left(\frac{\textbf{y}_n}{q(\alpha_n|x)}\left(\frac{p_n}{\alpha_n}\right)^\gamma\right)+(\hat{\gamma}_n-\gamma)\log\left(\frac{p_n}{\alpha_n}\right)\\
&+\hat{\gamma}_n\log(\hat{\sigma}_n(x))+\hat{\gamma}_n\log\left(\frac{\hat{p}_n}{p_n}\right). 
\end{align*}
Moreover we can see that 
\begin{align*}
\log\left(\frac{\textbf{y}_n}{q(\alpha_n|x)}\left(\frac{p_n}{\alpha_n}\right)^\gamma\right)=&\log\left(\frac{U(1/p_n)}{U_x(1/\alpha_n)}\left(\frac{p_n}{\alpha_n}\right)^\gamma\right)\\
=&\log\left(\frac{U(1/p_n)}{U(1/\alpha_n)}\left(\frac{p_n}{\alpha_n}\right)^\gamma\right)+\log\left(\frac{U(1/\alpha_n)}{U_x(1/\alpha_n)}\right)
\end{align*}
Then, we write 
\[
\frac{\sqrt{np_n}}{\log(p_n/\alpha_n)}\log\left(\frac{\hat{q}(\alpha_n|x)}{q(\alpha_n|x)}\right)=Q_{1,n}+Q_{2,n}+Q_{3,n}+Q_{4,n},
\]
with 
\begin{align*}
Q_{1,n}=&\frac{\sqrt{np_n}}{\log(p_n/\alpha_n)}\log\left(\frac{U(1/p_n)}{U(1/\alpha_n)}\left(\frac{p_n}{\alpha_n}\right)^\gamma\right), \\
Q_{2,n}=&\sqrt{np_n}(\hat{\gamma}_n-\gamma),\\
Q_{3,n}=&\frac{\sqrt{np_n}}{\log(p_n/\alpha_n)} \left(\hat{\gamma}_n\log(\hat{\sigma}_n(x))+\log\left(\frac{U(1/\alpha_n)}{U_x(1/\alpha_n)}\right)\right),\\
Q_{4,n}=&\frac{\sqrt{np_n}}{\log(p_n/\alpha_n)}\hat{\gamma}_n\log\left(\frac{\hat{p}_n}{p_n}\right).
\end{align*}
First, condition \eqref{second order 2} entails
\begin{align*}
Q_{1,n}\sim &\frac{\sqrt{np_n}}{\log(p_n/\alpha_n)}\left(\frac{U(1/p_n)}{U(1/\alpha_n)}\left(\frac{p_n}{\alpha_n}\right)^\gamma-1\right)\\
\sim &\frac{\sqrt{np_n}}{\log(p_n/\alpha_n)}\left(\frac{U((\alpha_n/p_n)^{\alpha\gamma}/\alpha_n)}{U(1/\alpha_n)}\left(\frac{p_n}{\alpha_n}\right)^\gamma-1\right)\\
=&\frac{\sqrt{np_n}}{\log(p_n/\alpha_n)}O(A_n).
\end{align*}
Since $\alpha_n=o(p_n)$, we see $\log(p_n/\alpha_n)^{-1}\rightarrow 0$ together with $\sqrt{np_n}A_n\rightarrow 0$ entails that $Q_{1,n}\rightarrow 0$. \\
Second, we know by Theorem \ref{EdHZ} that $Q_{2,n}\overset{\mathscr{L}}{\rightarrow}\mathcal{N}(0,\gamma^2)$. \\
Now $Q_{3,n}$ is studied remarking that 
\[
\log\left(\frac{U(1/\alpha_n)}{U_x(1/\alpha_n)}\right)=\log\left(\frac{U(\sigma(x)/\alpha_n)}{U_x(1/\alpha_n)}\right)+\log\left(\frac{U(1/\alpha_n)}{\sigma(x)^{-\gamma}U(\sigma(x)/\alpha_n)}\right)-\gamma\log(\sigma(x)).
\]
Together with \eqref{second order 1} and \eqref{second order 2}, one has
\[
\log\left(\frac{U(1/\alpha_n)}{U_x(1/\alpha_n)}\right)=O(A_n)-\gamma\log(\sigma(x)).
\] 
Consequently, 
\[
Q_{3,n}=\frac{\sqrt{np_n}}{\log(p_n/\alpha_n)}O(A_n)+\frac{\sqrt{np_n}}{\log(p_n/\alpha_n)}\Big(\hat{\gamma}_n\log(\hat{\sigma}_n(x))-\gamma\log(\sigma(x))\Big).
\]
Hence, the asymptotic behavior of $Q_{3,n}$ is ruled by that of $\hat{\gamma}_n\log(\hat{\sigma}_n(x))-\gamma\log(\sigma(x))$, which we split into
\[
(\hat{\gamma}_n-\gamma)\log(\hat{\sigma}_n(x))+\gamma\log(\hat{\sigma}_n(x))-\gamma\log(\sigma(x)).
\]
Now, Theorem \ref{EdHZ} entails that 
\[
\frac{\log(\hat{\sigma}_n(x))}{\log(p_n/\alpha_n)}\sqrt{np_n}(\hat{\gamma}_n-\gamma)\overset{\mathbb{P}}{\rightarrow} 0.
\]
Moreover, Theorem \ref{thm:normality-skedasis} together with the delta-method show that 
\begin{align*}
\frac{\sqrt{np_n}}{\log(p_n/\alpha_n)}&(\gamma\log(\hat{\sigma}_n(x))-\gamma\log(\sigma(x)))\\=&\frac{\sqrt{np_nh_n^d}}{\sqrt{h_n^d}\log(p_n/\alpha_n)}(\gamma\log(\hat{\sigma}_n(x))-\gamma\log(\sigma(x)))\overset{\mathbb{P}}{\rightarrow} 0.
\end{align*}
Now, using the notation introduced in the proof of Theorem \ref{thm:normality-skedasis}, we have 
\begin{align*}
\frac{\sqrt{np_n}}{\log(p_n/\alpha_n)}\log\left(\frac{\hat{p}_n}{p_n}\right)=&\frac{\sqrt{np_n}}{\log(p_n/\alpha_n)}\log\left(1+\frac{1}{\sqrt{np_n}}\sum_{i=1}^nZ_{i,n}^\sharp\right)\\
\sim& \frac{1}{\log(p_n/\alpha_n)}\sum_{i=1}^nZ_{i,n}^\sharp+o_\mathbb{P}\left(\frac{1}{\log(p_n/\alpha_n)}\right) \\
&\overset{\mathbb{P}}{\rightarrow}0,
\end{align*}
which proves that $Q_{4,n}\overset{\mathbb{P}}{\rightarrow}0$ since $\hat{\gamma}_n\overset{\mathbb{P}}{\rightarrow}\gamma.$
\section{Appendix}

\begin{lemma}\label{dony_einmahl}
For fixed $n \geq 1 $, let $(Y_i)_{1 \leq i \leq n}$ be a sequence of i.i.d. random variables taking values in $(\mathfrak{X},\mathcal{X})$.
Let be $(\epsilon_i)_{1 \leq i \leq n}$ a sequence of independent Bernoulli random variables independent from $Y_i$.
Write $\nu (k):= \underset{i=1}{\overset{k}{\sum}}\epsilon_i$ for $k \leq n$. We have
\begin{equation}\label{eq:distribution_permutation_sum}
\underset{i=1}{\overset{n}{\sum}}\delta_{Y_i}\epsilon_i\overset{\mathscr{L}}{=}\underset{i=1}{\overset{\nu (n)}{\sum}}\delta_{Y_i},
\end{equation}
where the equality in law is understood as on the sigma algebra spanned by all Borel positive functions on $(\mathfrak{X},\mathcal{X})$.
Moreover, for every $g$ positive and measurable function, we have
\begin{equation}\label{eq:distribution_permutation_max}
\underset{i=1...n}{\max}g(Y_i)\epsilon_i\overset{\mathscr{L}}{=}\underset{i=1...\nu(n)}{\max}g(Y_i).
\end{equation}
\end{lemma}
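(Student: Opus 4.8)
The plan is to derive both \eqref{eq:distribution_permutation_sum} and \eqref{eq:distribution_permutation_max} from a single conditioning argument: condition on the Bernoulli vector $(\epsilon_i)_{1\le i\le n}$ and exploit the exchangeability of the i.i.d.\ sequence $(Y_i)_{1\le i\le n}$, which is independent of it. Concretely, to prove \eqref{eq:distribution_permutation_sum} it suffices, by the stated meaning of equality in law, to check that for every finite family $g_1,\dots,g_m$ of positive measurable functions on $(\mathfrak{X},\mathcal{X})$ the random vectors $\big(\sum_{i=1}^n g_\ell(Y_i)\epsilon_i\big)_{1\le\ell\le m}$ and $\big(\sum_{i=1}^{\nu(n)} g_\ell(Y_i)\big)_{1\le\ell\le m}$ have the same distribution on $[0,\infty]^m$; equivalently, that $\mathbb{E}\big[\Phi\big(\sum_{i=1}^n\delta_{Y_i}\epsilon_i\big)\big]=\mathbb{E}\big[\Phi\big(\sum_{i=1}^{\nu(n)}\delta_{Y_i}\big)\big]$ for every bounded $\Phi$ measurable for the $\sigma$-algebra in question.

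First I would record the only probabilistic input, namely that for any fixed $S\subseteq\{1,\dots,n\}$ with $|S|=k$ the family $(Y_i)_{i\in S}$ has the same law as $(Y_1,\dots,Y_k)$ --- immediate from the i.i.d.\ assumption --- and that, because $(Y_i)_{1\le i\le n}$ and $(\epsilon_i)_{1\le i\le n}$ are independent, conditionally on $\{\,i\le n:\epsilon_i=1\,\}=S$ the subfamily $(Y_i)_{i\in S}$ is still i.i.d.\ with the original marginal. Writing $\nu(n)=\sum_{i=1}^n\epsilon_i=|S|$, it follows that the conditional law of $\sum_{i=1}^n\delta_{Y_i}\epsilon_i=\sum_{i\in S}\delta_{Y_i}$ given $(\epsilon_i)_{1\le i\le n}$ depends on that vector only through $\nu(n)$ and coincides, on the corresponding event, with the law of $\sum_{i=1}^{\nu(n)}\delta_{Y_i}$. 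Then the tower property gives
\[
\mathbb{E}\Big[\Phi\Big(\textstyle\sum_{i=1}^n\delta_{Y_i}\epsilon_i\Big)\Big]=\sum_{k=0}^n \mathbb{P}(\nu(n)=k)\,\mathbb{E}\Big[\Phi\Big(\textstyle\sum_{i=1}^k\delta_{Y_i}\Big)\Big],
\]
while, since $\nu(n)$ is a function of $(\epsilon_i)_{1\le i\le n}$ only and hence independent of $(Y_i)_{1\le i\le n}$,
\[
\mathbb{E}\Big[\Phi\Big(\textstyle\sum_{i=1}^{\nu(n)}\delta_{Y_i}\Big)\Big]=\sum_{k=0}^n \mathbb{P}(\nu(n)=k)\,\mathbb{E}\Big[\Phi\Big(\textstyle\sum_{i=1}^k\delta_{Y_i}\Big)\Big].
\]
The two right-hand sides agree, which proves \eqref{eq:distribution_permutation_sum}. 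For \eqref{eq:distribution_permutation_max} I would either rerun the identical conditioning argument with $\max_{i\in S}g(Y_i)$ in place of $\sum_{i\in S}\delta_{Y_i}$ (using the convention that an empty maximum is $0$, which matches $\max_{1\le i\le n}g(Y_i)\epsilon_i=0$ when all $\epsilon_i=0$ and $g\ge 0$), or observe that $\max_{1\le i\le n}g(Y_i)\epsilon_i=\sup\{t\ge 0:\sum_{i=1}^n\mathds{1}_{\{g(Y_i)>t\}}\epsilon_i\ge 1\}$ is a measurable functional of $\sum_{i=1}^n\delta_{Y_i}\epsilon_i$ and invoke \eqref{eq:distribution_permutation_sum}.

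As for difficulty, there is essentially no analytic content and the conditioning/exchangeability step is routine; the genuinely delicate part is only the measure-theoretic bookkeeping behind ``equality in law on the $\sigma$-algebra spanned by positive Borel functions'', i.e.\ making sure the class of test functionals $\Phi$ is rich enough to determine the relevant laws on $(\mathfrak{X},\mathcal{X})$ --- finite-dimensional functionals $m\mapsto\big(\int g_1\,dm,\dots,\int g_m\,dm\big)$ do suffice --- while remaining compatible with the conditioning. A minor secondary point is the degenerate term $k=0$, which must be read consistently (the zero measure, resp.\ the empty maximum) on both sides.
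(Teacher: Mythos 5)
Your proposal is correct and follows essentially the same route as the paper: condition on the Bernoulli vector, use the i.i.d./exchangeability of $(Y_i)$ to identify the law of the subfamily of selected indices with that of $(Y_1,\dots,Y_{\nu(n)})$, and then uncondition via the tower property. The paper merely makes the relabelling explicit through a concrete permutation $\sigma$ sending the indices with $\epsilon_i=1$ to the first $s(e)$ positions, which is the same idea written out in coordinates.
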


\begin{proof}
Let $e \in \lbrace0;1\rbrace^n$, and let $g$ be real measurable and positive function. Since the variables $(Y_i)_{1 \leq i \leq n}$ are i.i.d and independent from $(\epsilon_i)_{1 \leq i \leq n}$ we have, for any given permutation $\sigma$ of $\llbracket 1,n \rrbracket$, $$\Big(g(Y_1),...,g(Y_n)\Big)\overset{\mathscr{L}}{=}\Big(g(Y_1),...,g(Y_n) |\epsilon = e\Big)\overset{\mathscr{L}}{=} \Big(g(Y_{\sigma(1)}),...,g(Y_{\sigma(n)}) |\epsilon = e \Big)$$ by exchangeability. Now define $\sigma$ by 

$$
\sigma(k) = \left\{
    \begin{array}{ll}
        \underset{j=1}{\overset{i}{\sum}}e_j \text{ if } e_i=1 \\
        n-\underset{j=1}{\overset{i}{\sum}}1-e_j  \text{ if } e_i=0
    \end{array}
\right. 1\leq i \leq n .
$$
Write $s(e)=\underset{i=1}{\overset{n}{\sum}}s(e_i)$ for the total number of ones in $(e_1,...,e_n)$. By construction, the indices $i$ for which $e_i=1$ are mapped injectively to the set of first indices $\llbracket 1,s(e)\rrbracket$, while those for which $e_i=0$ are injectively mapped into $\llbracket s(e)+1,n\rrbracket$. \\
As $e$ has fixed and non random coordinates, we have
$$\Big(g(Y_1)e_1,...,g(Y_n)e_n \Big|\epsilon = e\Big)\overset{\mathscr{L}}{=} \Big(g(Y_{\sigma(1)})e_1,...,g(Y_{\sigma(n)})e_n \Big|\epsilon = e\Big) .$$
Hence
\[
\begin{array}{cll}
 \left. \sum_{i=1}^n g(Y_i) e_i \Big| \epsilon=e \right.&
\overset{\mathscr{L}}{=}\sum_{i=1}^n g(Y_i) e_i & \\
&\overset{\mathscr{L}}{=}\sum_{i=1}^n g(Y_{\sigma(i)}) e_{\sigma(i)}& \\
&\overset{\mathscr{L}}{=}\sum_{i=1}^{s(e)} g(Y_{\sigma(i)}) \quad \text{ (a)}&\\
&\overset{\mathscr{L}}{=}\sum_{i=1}^{s(e)} g(Y_{i}) \quad \quad \text{ (b)}&\\
&\overset{\mathscr{L}}{=}\left. \sum_{i=1}^{s(e)} g(Y_i) e_i \Big| \epsilon=e \right. &
\overset{\mathscr{L}}{=}\left. \sum_{i=1}^{s(\epsilon)} g(Y_i) e_i \Big| \epsilon=e \right. ,
\end{array}
\]
where (a) is because $e_{\sigma(i)}=0$ for $i>s(e)$ by construction and (b) is obtained by noticing that $F_e(y_{\sigma(1)},...,y_{\sigma(n)})\overset{\mathscr{L}}{=}F_e(y_1,...,y_n)$ with 
\[
F_e:(y_1,...,y_n) \mapsto \sum_{i=1}^{s(e)}g(y_i).
\]
Unconditioning upon $\epsilon$ gives (\ref{eq:distribution_permutation_sum}).
The same proof with $F_e(y_1,...,y_n)=\max\{g(y_i), 1\leq i \leq s(e)\}$ gives the second equality (\ref{eq:distribution_permutation_max}).

\end{proof}

\bibliographystyle{plain}
\bibliography{ref_these}

\end{document}